\documentclass[sn-mathphys-num]{sn-jnl}
\usepackage[utf8]{inputenc}
\usepackage{amsmath,color}
\usepackage{amsfonts}
\usepackage{epsfig}
\usepackage{amssymb,fullpage}
\numberwithin{equation}{section}
\usepackage{amsthm}

\newtheorem{lemma}{Lemma}
\theoremstyle{remark}
\newtheorem{example}{Example}
\numberwithin{example}{section}
\begin{document}

\title[Chemical reactions systems with non-hyperbolic critical points]{Classification of strongly non-hyperbolic critical points of planar second-order chemical reaction systems}

\author[*]{\fnm{Norm} \sur{Yeung}}\email{norm.yeung@maths.ox.ac.uk}

\author[*]{\fnm{Radek} \sur{Erban}}\email{radek.erban@maths.ox.ac.uk}

\affil[*]{\orgdiv{Mathematical Institute}, \orgname{University of Oxford}, \orgaddress{\street{Radcliffe Observatory Quarter,
Woodstock Road}, \city{Oxford}, \postcode{OX2 6GG}, \country{United Kingdom}}}

\abstract{The dynamics of chemical reaction systems with two chemical species can be described (under mass-action kinetics) by planar autonomous systems of ordinary differential equations (ODEs) with right-hand sides containing polynomials. While their behaviour close to hyperbolic critical points is well understood, their dynamics has not been fully characterized for strongly non-hyperbolic critical points, where the linearization close to the equilibrium is given by the zero matrix. In this paper, the behaviour of such chemical reaction systems is investigated. Considering second-order chemical reaction networks with a positive equilibrium point, we show that only nine geometric equivalence classes are possible out of the sixteen classes for general ODE systems. Examples of chemical reaction networks in each class are presented. All sixteen geometric equivalence classes can be realized by chemical systems if the non-hyperbolic equilibrium point is located at the origin.}

\keywords{chemical reaction networks, non-hyperbolic critical points}

\maketitle

\section{Introduction}

\label{sec1}

Chemical reaction networks provide a natural framework for describing a broad range of dynamical phenomena~\cite{Feinberg:2019:FCR,Erban:2020:SMR}. In recent years, the problem of identifying minimal chemical reaction systems exhibiting prescribed dynamical behaviour has been studied systematically, with particular emphasis on oscillatory dynamics~\cite{Banaji:2024:OTR,Boros:2022:LCM,Boros:2023:SMB,Boros:2024:OPD}, multistability~\cite{Wilhelm:2009:SCR,Craciun:2006:UBC, Joshi:2013:AMC,Joshi:2017:SRN,Conradi:2019:MBN,Tang:2021:MSR,Jiao:2025:MSZ}, and specific bifurcation structures~\cite{Wilhelm:1995:SCR,Wilhelm:1996:MAS,Plesa:2016:CRS,Plesa:2017:TMS,Banaji:2023:SBM,Banaji:2023:SBM2,Banaji:2024:BPQ}. Under mass-action kinetics, such questions are closely related to the study of dynamical properties of systems of ordinary differential equations (ODEs) with polynomial right-hand sides~\cite{Erban:2023:CSL,Craciun:2025:PCR}. Near a hyperbolic equilibrium, the local dynamics are determined, up to topological equivalence, by the linearization, as a consequence of the Hartman–Grobman theorem and the stable manifold theorem~\cite{Wiggins:2003:IAN}. By contrast, non-hyperbolic equilibria generally require a more refined nonlinear analysis. The situation is particularly intricate for strongly non-hyperbolic equilibria, by which we mean equilibria whose linearization is the zero matrix; their neighbourhoods may exhibit a rich variety of local phase portraits involving elliptic, hyperbolic, and parabolic sectors~\cite{Perko:2013:DED}.

For planar quadratic systems, the local phase portraits near such strongly non-hyperbolic equilibria were classified by Lyagina~\cite{Lyagina:1951:ICE}, who identified sixteen geometric equivalence classes. This classification was subsequently confirmed by Newton~\cite{Newton:1978:TDH}, who also related it to the alternative classification introduced by Markus~\cite{Markus:1960:QDE}. In the present paper, we address the corresponding classification problem for chemical reaction systems. Even in the quadratic case, the classifications of Lyagina~\cite{Lyagina:1951:ICE}, Markus~\cite{Markus:1960:QDE}, and Newton~\cite{Newton:1978:TDH} cannot be applied directly, since the polynomial vector fields arising from chemical reaction networks are subject to additional constraints on their coefficients. Our aim is therefore to determine which local phase portraits are compatible with these chemical constraints and to classify the resulting dynamics near strongly non-hyperbolic equilibria.

We consider chemical reaction networks with two chemical species $X$ and $Y$ which are subject to $m \in {\mathbb N}$ chemical reactions
\begin{equation}
\alpha_i X \, + \, \beta_i Y 
\;
\mathop{\longrightarrow}^{k_i}
\;
\gamma_i X \, + \, \delta_i Y \, ,
\qquad
\mbox{for}
\quad
i = 1, 2, \dots, m,
\label{crn}    
\end{equation}
where $\alpha_i \in {\mathbb N}_0,$ $\beta_i \in {\mathbb N}_0$, $\gamma_i \in {\mathbb N}_0$ and 
$\delta_i \in {\mathbb N}_0$ are nonnegative integers, called {\it stoichiometric coefficients}, 
satisfying
\begin{equation}
\alpha_i+\beta_i \le 2,
\label{secondorder}
\end{equation}
{\it i.e.} each chemical reaction is at most of the second-order (containing reactions of at most two molecules). The corresponding reaction {\it rate constant}, $k_i$, has physical units of [time]$^{-1}$[concentration]$^{1-\alpha_i-\beta_i}$~\cite{Craciun:2025:PCR}. However, in what follows, we assume that all chemical models have been non-dimensionalized, {\it i.e.} the rate constant $k_i$ is assumed to be a positive real number for $i=1,2,\dots,m$. Moreover, to avoid some degenerate cases, we assume that the same four-tuple $(\alpha_i, \beta_i,\gamma_i,\delta_i)$ does not occur more than once in the set of $m$ chemical reactions~(\ref{crn}) and we have $(\alpha_i, \beta_i) \ne (\gamma_i,\delta_i)$, {\it i.e.} in each reaction step at least one of the two chemical species $X$ and $Y$ changes. To get non-trivial planar systems, we also assume that both species take part in at least one reaction in the  set of $m$ chemical reactions~(\ref{crn}).

Assuming mass-action kinetics, the time evolution of the chemical reaction network~(\ref{crn}) is given by the reaction rate equations which is a planar polynomial ODE system in the following form~\cite{Feinberg:2019:FCR,Erban:2020:SMR} 
\begin{eqnarray}
\frac{\mbox{{\rm d}}x}{\mbox{{\rm d}}t}
& = &
\sum_{i=1}^m 
k_i \, (\gamma_i-\alpha_i) \, x^{\alpha_i} \, y^{\beta_i} \, , 
\label{general_chemical_system_x}
\\
\frac{\mbox{{\rm d}}y}{\mbox{{\rm d}}t}
& = &
\sum_{i=1}^m k_i \, (\delta_i-\beta_i) \, x^{\alpha_i} \, y^{\beta_i}   \, .
\label{general_chemical_system_y}
\end{eqnarray}
Using assumption~(\ref{secondorder}), this ODE system can be rewritten as
\begin{eqnarray}
\frac{\mbox{{\rm d}}x}{\mbox{{\rm d}}t}
\,& = &\,
a_{0,0}
\,+\,
a_{1,0} \, x
\,+\,
a_{0,1} \, y
\,+\,
a_{2,0} \, x^2
\,+\,
a_{1,1} \, x \, y
\,+\,
a_{0,2} \, y^2 \,,
\label{quadraticchemical1}
\\
\frac{\mbox{{\rm d}}y}{\mbox{{\rm d}}t}
\,& = &\,
b_{0,0}
\,+\,
b_{1,0} \, x
\,+\,
b_{0,1} \, y
\,+\,
b_{2,0} \, x^2
\,+\,
b_{1,1} \, x \, y
\,+\,
b_{0,2} \, y^2 \,,
\label{quadraticchemical2}
\end{eqnarray}
where $a_{ij}$, $b_{ij}$ are real numbers satisfying~\cite{Craciun:2025:PCR}
\begin{equation}
a_{0,0} \ge 0, \qquad
a_{0,1} \ge 0, \qquad
a_{0,2} \ge 0, \qquad
b_{0,0} \ge 0, \qquad
b_{1,0} \ge 0 \qquad
\mbox{and}
\qquad
b_{2,0} \ge 0\,.
\label{condchem}    
\end{equation}
The condition~(\ref{condchem}) is both a necessary and sufficient condition for the ODE system~(\ref{quadraticchemical1})--(\ref{quadraticchemical2}) to represent a chemically reacting system~\cite[Lemma 1]{Craciun:2025:PCR}. In particular, the classification of the dynamical behaviour of chemical reaction networks is equivalent to classifying the dynamical behaviour of ODEs in the form~(\ref{quadraticchemical1})--(\ref{quadraticchemical2}) satisfying condition~(\ref{condchem}). In this paper, we will focus on positive equilibrium points in Section~\ref{sec3}, because they provide the most restrictions on observable dynamics. If the equilibrium point is located at the origin, then the phase plane of any of the sixteen equivalence classes of Lyagina~\cite{Lyagina:1951:ICE} can be realized by chemical systems, although we are only interested in their restrictions to the non-negative quadrant. In what follows, we denote the positive and non-negative quadrants by 
\begin{equation}
\Omega \;=\;
\big\{ (x,y) \in {\mathbb R}^2 \; \mbox{where} \; x > 0, \; y > 0 \big\}
\qquad
\mbox{and}
\qquad
\overline{\Omega} \;=\;
\big\{ (x,y) \in {\mathbb R}^2 \; \mbox{where} \; x \ge 0, \; y \ge 0 \big\}\,.
\label{posquadrant}    
\end{equation}
The paper is organized as follows. In Section~\ref{sec2}, we start our discussion with the strongly non-hyperbolic equilibrium point located at the origin. This is followed by the classification of phase portraits close to the non-hyperbolic positive equilibria of chemical systems in Section~\ref{sec3}. Since different chemical reaction networks can correspond to the same ODE system~\cite{Plesa:2018:NCM}, a related question is to identify the simplest possible chemical reaction networks within each geometric equivalence class. We discuss this point further in our concluding Section~\ref{sec4}.

\section{Strongly non-hyperbolic equilibrium point at the origin}

\label{sec2}

We first investigate the behaviour of ODEs~(\ref{quadraticchemical1})--(\ref{quadraticchemical2}) close to a non-hyperbolic critical point at the origin $[0,0]$, where the linearization close to the equilibrium is given by the zero matrix. Then we have 
$$
a_{0,0} \, = \, 0\,, \qquad
a_{1,0} \, = \, 0\,, \qquad
a_{0,1} \, = \, 0\,, \qquad
b_{0,0} \, = \, 0\,, \qquad
b_{1,0} \, = \, 0\,, \qquad
b_{0,1} \, = \, 0\,, \qquad
$$
and the ODEs~(\ref{quadraticchemical1})--(\ref{quadraticchemical2}) read as follows
\begin{eqnarray}
\frac{\mbox{{\rm d}}x}{\mbox{{\rm d}}t}
\,& = &\,
a_{2,0} \, x^2
\,+\,
a_{1,1} \, x \, y
\,+\,
a_{0,2} \, y^2 \,,
\label{quadraticchemical1hom}
\\
\frac{\mbox{{\rm d}}y}{\mbox{{\rm d}}t}
\,& = &\,
b_{2,0} \, x^2
\,+\,
b_{1,1} \, x \, y
\,+\,
b_{0,2} \, y^2 \,,
\label{quadraticchemical2hom}
\end{eqnarray}
with conditions~(\ref{condchem}) simplifying to
\begin{equation}
a_{0,2} \ge 0 \qquad
\mbox{and}
\qquad
b_{2,0} \ge 0\,.
\label{condchemhom}    
\end{equation}
In general, two-dimensional homogeneous quadratic systems have sixteen geometric equivalence classes~\cite{Lyagina:1951:ICE,Newton:1978:TDH}. In our case, we restrict to systems satisfying inequalities~(\ref{condchemhom}). However, this restriction does not reduce the number of geometric equivalence classes. In particular, we can construct chemical systems with elliptic, parabolic and hyperbolic sectors as shown in the following two examples. We are only interested in their dynamics in the non-negative quadrant $\overline{\Omega}$ given by~(\ref{posquadrant}). 

\vskip 2mm

\begin{example}
\label{example1}
Consider the following system of $m=3$ chemical reactions:
\begin{equation}
2 X \,
\;
\mathop{\longrightarrow}^{1}
\;
3 X \, ,
\qquad
X \, + \, Y 
\;
\mathop{\longrightarrow}^{3}
\;
2 Y \, 
\qquad
\mbox{and}
\qquad
2 Y 
\;
\mathop{\longrightarrow}^{1}
\;
Y \, .
\label{crnex1}    
\end{equation}
Then the ODEs~(\ref{quadraticchemical1hom})--(\ref{quadraticchemical2hom}) reduce to
\begin{eqnarray}
\frac{\mbox{{\rm d}}x}{\mbox{{\rm d}}t}
\,& = &\,
x^2
\,-\,
3 \, x \, y \,,
\label{quadraticchemical1homex1}
\\
\frac{\mbox{{\rm d}}y}{\mbox{{\rm d}}t}
\,& = &\,
3 \, x \, y
\,-\,
y^2 \,,
\label{quadraticchemical2homex1}
\end{eqnarray}
which has an elliptic sector in the positive quadrant $\Omega$. Given the initial condition $(x(0),y(0)) \in \Omega$, we define constant $c=x(0) \, y(0) \big/ (x(0)+y(0))^{4}.$ Then the solution to the ODEs~(\ref{quadraticchemical1homex1})--(\ref{quadraticchemical2homex1}) satisfies
$$
x \, y \, = \, c \, (x+y)^4\,.
$$
In particular, we have
\begin{equation}
\lim_{t \to \pm \infty}
[x(t),y(t)]
\,=\,
[0,0].
\label{homorbitcond}
\end{equation}
All trajectories in the positive quadrant $\Omega$ are homoclinic orbits as shown in Figure~\ref{figure1}(a). 
\end{example}

\vskip 2mm

\begin{figure}
\rule{0pt}{1pt}
\vskip 1mm
\rule{0pt}{1pt}
\hskip -1.5cm
\epsfig{file=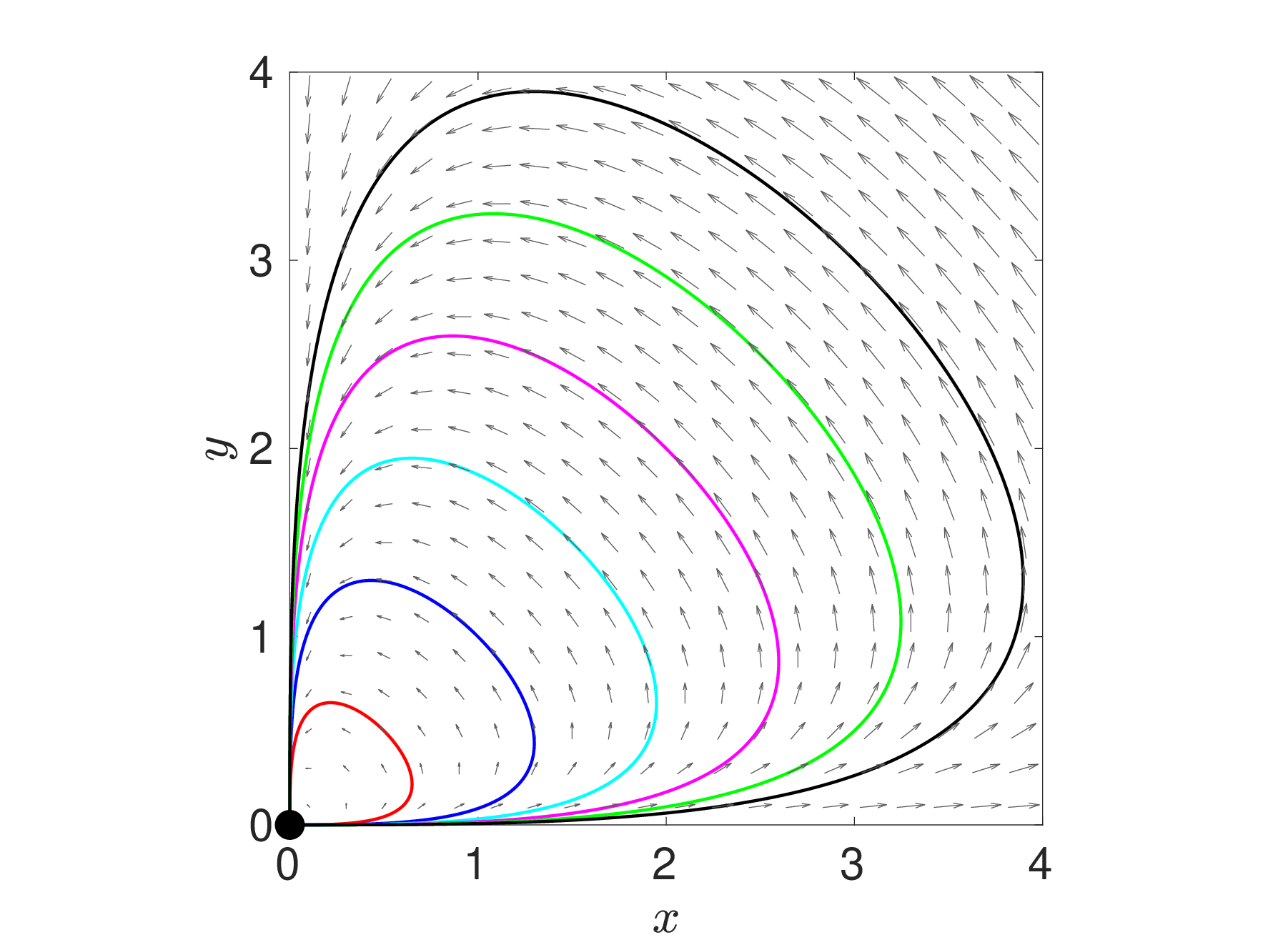,height=7.4cm}
\hskip -2cm
\epsfig{file=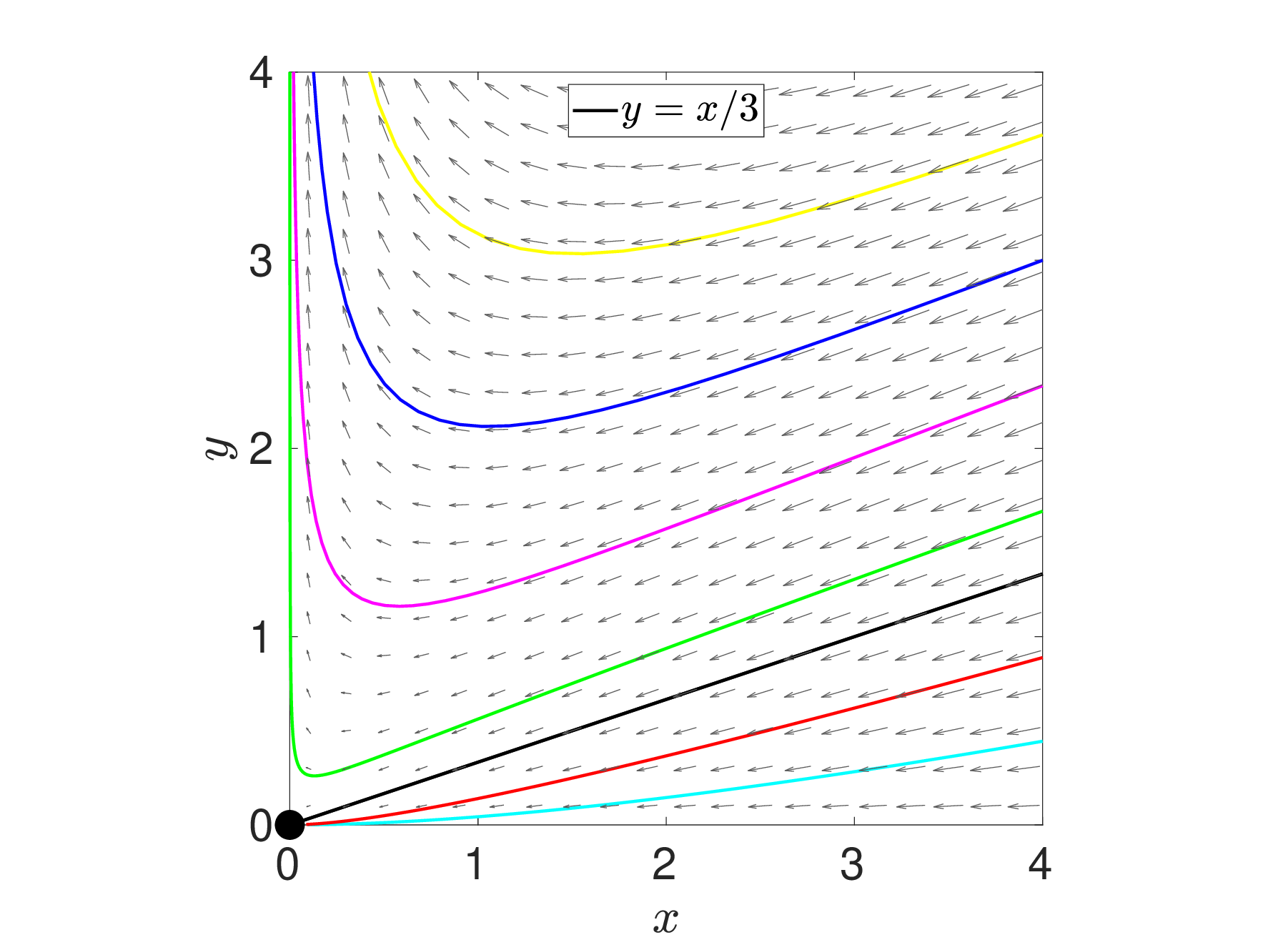,height=7.4cm}
\vskip -7.6cm
(a) \hskip 76mm (b)
\vskip 7.1cm
\caption{{\it {\rm (a)} The phase plane of the ODE system~$(\ref{quadraticchemical1homex1})$--$(\ref{quadraticchemical2homex1})$ describing the chemical reaction network~$(\ref{crnex1})$. We plot trajectories in the positive quadrant $\Omega$, which pass through the points $(x_0,x_0)$ for $x_0 \in \{ 0.5, 1, 1.5, 2, 2.5, 3\}$. The trajectories are homoclinic orbits satisfying~$(\ref{homorbitcond})$, illustrating that the positive quadrant is an elliptic sector. 
\hfill\break {\rm (b)} The phase plane of the ODE system~$(\ref{quadraticchemical1homex2})$--$(\ref{quadraticchemical2homex2})$  describing the chemical reaction network~$(\ref{crnex2})$. We plot illustrative trajectories in both the parabolic sector $\{ (x,y) \in \mathbb{R}^2 \,|\, 0 < y < x/3 \} $ and the hyperbolic sector $\{ (x,y) \in \mathbb{R}^2 \,|\, 0 < x/3 < y \} $. 
}}
\label{figure1}
\end{figure}

\begin{example}
\label{example2}
Consider the following system of $m=3$ chemical reactions:
\begin{equation}
2 X \,
\;
\mathop{\longrightarrow}^{1}
\;
X \, ,
\qquad
X \, + \, Y 
\;
\mathop{\longrightarrow}^{2}
\;
\emptyset \, 
\qquad
\mbox{and}
\qquad
2 Y 
\;
\mathop{\longrightarrow}^{1}
\;
3 Y \, .
\label{crnex2}    
\end{equation}
Then the ODEs~(\ref{quadraticchemical1hom})--(\ref{quadraticchemical2hom}) reduce to
\begin{eqnarray}
\frac{\mbox{{\rm d}}x}{\mbox{{\rm d}}t}
\,& = &\,
-x^2
\,-\,
2 \, x \, y \,,
\label{quadraticchemical1homex2}
\\
\frac{\mbox{{\rm d}}y}{\mbox{{\rm d}}t}
\,& = &\,
-2 \, x \, y
\,+\,
y^2 \,,
\label{quadraticchemical2homex2}
\end{eqnarray}
which has one parabolic sector and one hyperbolic sector in the positive quadrant $\Omega$. The boundary between these two sectors is given by the invariant line $y=x/3$ which is plotted as the black line in Figure~\ref{figure1}(b). The first integral of the system is $x (x-3y)^5 y^{-3} = \mbox{constant}.$ In particular, starting with the initial condition in the parabolic sector, we have
$$
\lim_{t \to \infty}
[x(t),y(t)]
\,=\,
[0,0]
\qquad
\mbox{for}
\qquad
y(0) \le x(0)/3,
$$
while we have $x(t) \to 0$ and $y(t) \to \infty$ as $t \to \infty$ for any trajectory starting in the hyperbolic sector $y(0) > x(0)/3.$
\end{example}

\vskip 2mm

\noindent
Examples~\ref{example1} and~\ref{example2} show that the phase planes of chemical systems with a strongly non-hyperbolic critical point at the origin can contain elliptic, parabolic, and hyperbolic sectors. Moreover, it is easy to verify that all sixteen of Lyagina’s equivalence classes~\cite{Lyagina:1951:ICE,Newton:1978:TDH} can be realized by such chemical systems. The situation is more delicate when the strongly non-hyperbolic equilibrium point has positive coordinates, as studied in the next Section~\ref{sec3}. In particular, we show that classes containing elliptic sectors, such as the one depicted in Figure~\ref{figure1}(a), cannot be realized by chemical systems when the non-hyperbolic critical point has positive coordinates.

\section{Positive strongly non-hyperbolic equilibrium point}

\label{sec3}

We next investigate the behaviour of ODEs~(\ref{quadraticchemical1})--(\ref{quadraticchemical2}) close to a non-hyperbolic positive critical point, where the linearization close to the equilibrium is given by the zero matrix. Without loss of generality, we assume that the non-hyperbolic positive critical point is at $[1,1]$. This means that we have
\begin{eqnarray}
0
\,& = &\,
a_{0,0}
\,+\,
a_{1,0}
\,+\,
a_{0,1}
\,+\,
a_{2,0}
\,+\,
a_{1,1}
\,+\,
a_{0,2}\,,
\label{eqpoint1}
\\
0
\,& = &\,
b_{0,0}
\,+\,
b_{1,0}
\,+\,
b_{0,1}
\,+\,
b_{2,0}
\,+\,
b_{1,1}
\,+\,
b_{0,2} \,,
\label{eqpoint2}
\end{eqnarray}
and the Jacobian at the critical point $[1,1]$ is the zero matrix, which implies
\begin{equation}
\begin{pmatrix}
a_{1,0}
+
2 \, a_{2,0} 
+
a_{1,1} \,
&
a_{0,1}
+
a_{1,1}
+
2 \, a_{0,2} 
\\
b_{1,0}
+
2\,b_{2,0} 
+
b_{1,1}
\,
&
b_{0,1}
+
b_{1,1}
+
2\,b_{0,2}
\end{pmatrix}
\,=\,
\begin{pmatrix}
0 & 0 \\
0 & 0
\end{pmatrix}.
\label{zeromatrixcondition}
\end{equation}
The assumptions~(\ref{eqpoint1})--(\ref{zeromatrixcondition}) help us to evaluate coefficients $a_{0,0}$, $a_{1,0}$, $a_{0,1}$, $b_{0,0}$, $b_{1,0}$ and $b_{0,1}$ in terms of coefficients $a_{2,0}$, $a_{1,1}$, $a_{0,2}$, $b_{2,0}$, $b_{1,1}$ and $b_{0,2}$. We obtain
\begin{eqnarray}
a_{0,0}
\,& = &\,
a_{2,0}
\,+\,
a_{1,1}
\,+\,
a_{0,2}
\label{othcoef1}
\\
b_{0,0}
\,& = &\,
b_{2,0}
\,+\,
b_{1,1}
\,+\,
b_{0,2} 
\label{othcoef2}
\\
a_{1,0}
\,& = &\,
\,-\,
2 \, a_{2,0} \,
\,-\,
a_{1,1}
\label{othcoef3}
\\
a_{0,1}
\,& = &\,
\,-\,
a_{1,1}
\,-\,
2 \, a_{0,2} 
\label{othcoef4}
\\
b_{1,0}
\,& = &\,
\,-\,
2\,b_{2,0} 
\,-\,
b_{1,1}
\label{othcoef5}
\\
b_{0,1}
\,& = &\,
\,-\,
b_{1,1}
\,-\,
2\,b_{0,2}\,.
\label{othcoef6}
\end{eqnarray}
Using relations~(\ref{othcoef1})--(\ref{othcoef6}), we conclude that we have six free real parameters $a_{2,0}$, $a_{1,1}$, $a_{0,2}$, $b_{2,0}$, $b_{1,1}$ and $b_{0,2}$, and conditions~(\ref{condchem}) can be written in the form
\begin{eqnarray}
a_{2,0}
\,+\,
a_{1,1}
\,+\,
a_{0,2}
\,\ge\, 
0 \,,
&\qquad&
b_{2,0}
\,+\,
b_{1,1}
\,+\,
b_{0,2} 
\,\ge\, 
0 \,,
\label{ineq1}
\\
\,-\,
a_{1,1}
\,\ge\, 
2 \, a_{0,2} 
\,\ge\, 
0 \,,
&\qquad&
\,-\,
b_{1,1}
\,\ge\, 
2\,b_{2,0} 
\,\ge\, 
0 \,.
\label{ineq2}
\end{eqnarray}
We have the following lemma.

\vskip 6mm

\begin{lemma}
Let $a_{2,0}$, $a_{1,1}$, $a_{0,2}$, $b_{2,0}$, $b_{1,1}$ and $b_{0,2}$ be real parameters satisfying inequalities~$(\ref{ineq1})$--$(\ref{ineq2})$. Consider the planar ODE system $(\ref{quadraticchemical1})$--$(\ref{quadraticchemical2})$, where parameters $a_{0,0}$, $a_{1,0}$, $a_{0,1}$, $b_{0,0}$, $b_{1,0}$ and $b_{0,1}$ are given by~$(\ref{othcoef1})$--$(\ref{othcoef6})$. Then $[1,1]$ is a non-hyperbolic critical point of the system, with the linearization given by the zero matrix. Moreover, introducing new variables
\begin{equation}
u \, = \, x - 1
\qquad
\mbox{and}
\qquad
v \, = \, y - 1\,,
\label{uvvariables}
\end{equation}
the planar ODE system $(\ref{quadraticchemical1})$--$(\ref{quadraticchemical2})$ transforms to
\begin{eqnarray}
\frac{\mbox{{\rm d}}u}{\mbox{{\rm d}}t}
\,& = &\,
a_{2,0} \, u^2
\,+\,
a_{1,1} \, u \, v
\,+\,
a_{0,2} \, v^2 \,,
\label{homODE1}
\\
\frac{\mbox{{\rm d}}v}{\mbox{{\rm d}}t}
\,& = &\,
b_{2,0} \, u^2
\,+\,
b_{1,1} \, u \, v
\,+\,
b_{0,2} \, v^2 \,.
\label{homODE2}
\end{eqnarray}
\label{lemma1}
\end{lemma}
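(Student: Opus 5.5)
The plan is a direct verification by substitution, organised as a Taylor expansion about $[1,1]$. Denote the right-hand sides of (\ref{quadraticchemical1})--(\ref{quadraticchemical2}) by $f(x,y)$ and $g(x,y)$. These are polynomials of total degree at most two. Hence, in the variables (\ref{uvvariables}), we have exactly
$$
f(1+u,1+v) \,=\, f(1,1) + \partial_x f(1,1)\,u + \partial_y f(1,1)\,v + \tfrac12 \partial_{xx} f\, u^2 + \partial_{xy} f\, uv + \tfrac12 \partial_{yy} f\, v^2,
$$
with no remainder, and similarly for $g$. The second derivatives of $f$ are constant: $\partial_{xx} f = 2a_{2,0}$, $\partial_{xy} f = a_{1,1}$ and $\partial_{yy} f = 2a_{0,2}$. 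So the quadratic part of $f$ in $(u,v)$ is $a_{2,0}u^2 + a_{1,1}uv + a_{0,2}v^2$ no matter what the lower-order coefficients are. The same holds for $g$ with the $b$'s.

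It therefore suffices to check two things, using (\ref{othcoef1})--(\ref{othcoef6}):
\begin{itemize}
\item $f(1,1) = g(1,1) = 0$, which are (\ref{eqpoint1})--(\ref{eqpoint2});
\item all four first partial derivatives vanish at $[1,1]$, which is (\ref{zeromatrixcondition}).
\end{itemize}
For the first item, substitute (\ref{othcoef1}), (\ref{othcoef3}) and (\ref{othcoef4}) into $a_{0,0}+a_{1,0}+a_{0,1}+a_{2,0}+a_{1,1}+a_{0,2}$. The coefficients of $a_{2,0}$, $a_{1,1}$ and $a_{0,2}$ are then $1-2+1$, $1-1-1+1$ and $1-2+1$, all zero. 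The computation for $g$ is identical. For the second item, (\ref{othcoef3})--(\ref{othcoef6}) are just the four entries of (\ref{zeromatrixcondition}) solved for $a_{1,0}$, $a_{0,1}$, $b_{1,0}$ and $b_{0,1}$, so they hold by construction.

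It follows that $[1,1]$ is a critical point, that the Jacobian there is the zero matrix (so the point is non-hyperbolic), and that the Taylor expansion above reduces exactly to (\ref{homODE1})--(\ref{homODE2}). Since $\mathrm{d}u/\mathrm{d}t = \mathrm{d}x/\mathrm{d}t$ and $\mathrm{d}v/\mathrm{d}t = \mathrm{d}y/\mathrm{d}t$, this is the transformed system.

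The inequalities (\ref{ineq1})--(\ref{ineq2}) play no role in the algebra. They only guarantee, via (\ref{condchem}), that the system is chemical. There is no real obstacle here; the only care needed is the bookkeeping of signs in the constant-term check.
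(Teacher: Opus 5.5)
Your proof is correct and is essentially the paper's argument. The paper also just observes that the choice (\ref{othcoef1})--(\ref{othcoef6}) makes (\ref{eqpoint1})--(\ref{zeromatrixcondition}) hold and then substitutes $x=1+u$, $y=1+v$; your exact second-order Taylor expansion is a clean way of carrying out that substitution, and your coefficient checks ($1-2+1$, $1-1-1+1$, $1-2+1$) are right.
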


\begin{proof}
Conditions~~(\ref{eqpoint1})--(\ref{zeromatrixcondition}) for the non-hyperbolic critical point at $[1,1]$, with the linearization given by the zero matrix, are satisfied by our choice of coefficients~(\ref{othcoef1})--(\ref{othcoef2}). Substituting the change of variables (\ref{uvvariables}) into~(\ref{quadraticchemical1})--(\ref{quadraticchemical2}), we obtain the ODE system~(\ref{homODE1})--(\ref{homODE2}).
\end{proof}

\vskip 5mm

\begin{table}[t]
\centering
\begin{tabular}{|c|c|c|ccccccccccc|}
\hline
\!\!Lyagina\!\!&
Newton  &  
Markus & \multicolumn{11}{c|}{example chemical system with $a_{02}=0$} \\
\cite{Lyagina:1951:ICE}
& \cite{Newton:1978:TDH} & \cite{Markus:1960:QDE} 
& \!\!\!$a_{2,0}$\!\!\!\! 
& \!\!\!\!$a_{1,1}$\!\!\!\! 
& \!\!\!\!$b_{2,0}$\!\!\!\! 
& \!\!\!\!$b_{1,1}$\!\!\!\! 
& \!\!\!\!$b_{0,2}$\!\!\!\! 
& \!\!\!\!$a_{1,0}$\!\!\!\! 
& \!\!\!\!$a_{0,1}$\!\!\!\! 
& \!\!\!\!$b_{1,0}$\!\!\!\!
& \!\!\!\!$b_{0,1}$\!\!\!\! 
& \!\!\!\!$a_{0,0}$\!\!\!\! 
& \!\!\!$b_{0,0}$\!\!\! \\
\hline
\rule{0pt}{2.8mm}1 & $H$ 
& $+ \leftarrow - \rightarrow$ 
& 6 & -6 & 2 & -4 & \!\!39\!\!
& -6 & 6 & 0 & \!\!-74\!\! & 0 & \!\!37\!\!
\\ \hline
\rule{0pt}{2.8mm}4 & $P_2H$ 
& $+ \leftarrow - \rightarrow$ 
& \!\!24\!\! & 0 & \!\!58\!\! & \!\!\!\!-116\!\!\!\! & 100 
& \!\!-48\!\! & 0 & 0 & \!\!-84\!\! & \!\!24\!\! & \!\!42\!\! \\ \hline
\rule{0pt}{2.8mm}5 & $H, P_i-P_j$ 
& $+ \rightarrow + \leftarrow - \leftarrow - \rightarrow$
& \!\!12\!\! & -3 & 4 & -8 & \!\!22\!\! 
& \!\!-21\!\! & 3 & 0 & \!\!-36\!\! & 9 & \!\!18\!\!
\\ \hline
\rule{0pt}{2.8mm}8 & $P_2H, P_i-P_j$ 
& $+ \rightarrow + \leftarrow - \leftarrow - \rightarrow$
& 6 & 0 & 1 & -2 & \!\!16\!\! 
& \!\!-12\!\! & 0 & 0 & \!\!-30\!\! 
& 6 & \!\!15\!\! \\ \hline    
\rule{0pt}{2.8mm}11 & $P_1,P_2,H$ 
& $+ \leftarrow + \rightarrow + \leftarrow - \rightarrow - \leftarrow - \rightarrow$ 
& 8 & -8 & 1 & -2 & 9 
& -8 & 8 & 0 & \!\!-16\!\!
& 0 & 8
\\ \hline
\rule{0pt}{2.8mm}12 & $P_1, H, H$ 
& $+ \leftarrow + \rightarrow + \leftarrow - \rightarrow - \leftarrow - \rightarrow$ 
& 1 & -1 & 0 & -1 & 2 
& -1 & 1 & 1 & -3
& 0 & 1 \\ \hline
\rule{0pt}{2.8mm}14 & $P_1, H, P_2H$ 
& $+ \leftarrow + \rightarrow + \leftarrow - \rightarrow - \leftarrow - \rightarrow$ 
& \!\!35\!\! & 0 & 4 & -8 & \!\!81\!\! 
& \!\!-70\!\! & 0 & 0 & \!\!\!\!-154\!\!\!\!
& \!\!35\!\! & \!\!77\!\! \\ \hline
\rule{0pt}{2.8mm}15 & \!\!$P_1, P_2H, P_2H$\!\!
& $+ \leftarrow + \rightarrow + \leftarrow - \rightarrow - \leftarrow - \rightarrow$ 
& \!\!85\!\! & 0 & 4 & -8 & \!\!20\!\! 
& \!\!\!\!-170\!\!\!\! & 0 & 0 & \!\!-32\!\! 
& \!\!85\!\! & \!\!16\!\! 
\\ \hline
\rule{0pt}{2.8mm}16 & \!\!$P_1, P_2H, H$\!\! 
& $+ \leftarrow + \rightarrow + \leftarrow - \rightarrow - \leftarrow - \rightarrow$ 
& \!\!62\!\! & 0 & 2 & -4 & \!\!64\!\! 
& \!\!\!\!-124\!\!\!\! & 0 & 0 & \!\!\!\!-124\!\!\!\!
& \!\!62\!\! & \!\!62\!\! \\ \hline
\end{tabular}
\caption{{\it Equivalence classes of chemical systems for equilibrium at $[1,1]$ with parameters of example chemical systems in each class, which have $8$-$10$ chemical reactions.}}
\label{table1}
\end{table}

\noindent
Two-dimensional homogeneous quadratic systems have sixteen geometric equivalence classes~\cite{Lyagina:1951:ICE,Newton:1978:TDH}. However, these results were obtained by considering arbitrary real parameters $a_{2,0}$, $a_{1,1}$, $a_{0,2}$, $b_{2,0}$, $b_{1,1}$ and $b_{0,2}$. In our case, Lemma~\ref{lemma1} restricts the choices of these coefficients by requiring inequalities~(\ref{ineq1})--(\ref{ineq2}). In this paper, we show that only nine of the sixteen geometric equivalence classes identified by Lyagina \cite{Lyagina:1951:ICE} can describe the dynamics of chemical reaction networks. The results are summarized in Table~\ref{table1}, where we use both Lyagina’s numbering~\cite{Lyagina:1951:ICE} and the notation introduced by Newton~\cite{Newton:1978:TDH}. The table also presents the results in terms of the notation used by Marcus~\cite{Markus:1960:QDE}, who classified the corresponding homogeneous quadratic systems into six classes. This classification was later expanded to seven classes by Newton~\cite{Newton:1978:TDH}, who observed that Marcus had overlooked one case. In terms of Marcus’s classification, Table~\ref{table1} shows that three classes can be realized by chemical systems. 

In general, these chemical systems have 12 nonzero parameters, corresponding to six second-order reactions ($a_{2,0}$, $a_{1,1}$, $a_{0,2}$, $b_{2,0}$, $b_{1,1}$, $b_{0,2}$), four first-order reactions ($a_{1,0}$, $a_{0,1}$, $b_{1,0}$, $b_{0,1}$) and two zero-order reactions ($a_{0,0}$, and $b_{0,0}$), giving chemical systems with $m=12$ chemical reactions. However, some of these parameters can be chosen to be zero. Examples in each geometric equivalence class can be constructed with $a_{0,2}=0$. The remaining coefficients are given in Table~\ref{table1}, which is further discussed in Section~\ref{sec36} . As shown in the table, for some geometric equivalence classes, examples can be found with as many as four zero coefficients, corresponding to chemical systems with only $m=8$ chemical reactions. However, we note that Table~\ref{table1} was obtained by imposing the restriction $a_{0,2}=0$ and conducting a computer search for suitable parameters satisfying the conditions of each geometric equivalence class. In some classes, chemical systems with fewer reactions do exist, as we discuss in Section~\ref{sec4}.

\subsection{Invariant lines}

To establish our results, summarized in Table~\ref{table1}, we will first study the invariant lines of the chemical system~(\ref{quadraticchemical1})--(\ref{quadraticchemical2}) which pass through the equilibrium point $[1,1]$. Such lines are given in the form
\begin{equation}
y \,=\, 1 \, + \, \eta \, (x-1) \,,
\label{invlines}
\end{equation}
where $\eta \in {\mathbb R} \cup \{\infty\}.$ If $\eta=\infty,$ then $x=1$, while the case $y=1$ corresponds to $\eta=0$ in equation~(\ref{invlines}). We formulate the criterion for finding invariant lines in our next lemma.

\vskip 2mm

\begin{lemma}
The chemical system~$(\ref{quadraticchemical1})$--$(\ref{quadraticchemical2})$ has invariant lines through the equilibrium point $[1,1]$ of the form~$(\ref{invlines})$
provided that $\eta$ is the solution of the equation
\begin{equation}
\sigma_3 \, \eta^3
\,-\,
\sigma_2 \, \eta^2
\,+\,
\sigma_1 \, \eta
\,-\,
\sigma_0
\,=\,
0
\label{cubicinvlinesigma}    
\end{equation}
where
\begin{equation}
\sigma_3 = a_{0,2}\,,
\qquad
\sigma_2 = b_{0,2}-a_{1,1}\,,
\qquad
\sigma_1 = a_{2,0}-b_{1,1}
\qquad\mbox{and}\qquad
\sigma_0 = b_{2,0}\,. 
\label{sigmavalues}
\end{equation}
Moreover, the coefficients $\sigma_i\,,$ $i=1,2,3,4$, satisfy the inequalities
\begin{equation}
\sigma_1 \ge \sigma_3 \ge 0\,
\qquad\mbox{and}\qquad
\sigma_2 \ge \sigma_0 \ge 0\,.
\label{sigmaineq}
\end{equation} 
In particular, we have one, two or three invariant lines~$(\ref{invlines})$ and they are all non-decreasing, i.e. all solutions of $(\ref{cubicinvlinesigma})$ satisfy the non-negativity condition
\begin{equation}
\eta \ge 0\,.
\label{nonnegcond}    
\end{equation}
\label{lemma2}
\end{lemma}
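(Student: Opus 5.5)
We work throughout in the shifted variables of Lemma~\ref{lemma1}, where the system is the homogeneous quadratic system~(\ref{homODE1})--(\ref{homODE2}). A line~(\ref{invlines}) becomes $v=\eta u$ in these coordinates, and the vertical line $x=1$ becomes $u=0$.

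\textbf{Step 1: the invariance condition.} For a finite slope $\eta$, the line $v=\eta u$ is invariant exactly when $\dot v-\eta\dot u=0$ along it. Substituting $v=\eta u$ gives
\[
u^2\bigl(b_{2,0}+b_{1,1}\eta+b_{0,2}\eta^2\bigr)-\eta u^2\bigl(a_{2,0}+a_{1,1}\eta+a_{0,2}\eta^2\bigr).
\]
This vanishes identically if and only if
\[
a_{0,2}\eta^3-(b_{0,2}-a_{1,1})\eta^2+(a_{2,0}-b_{1,1})\eta-b_{2,0}=0,
\]
which is~(\ref{cubicinvlinesigma}) with the coefficients~(\ref{sigmavalues}). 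The case $\eta=\infty$ (the line $u=0$) requires $\dot u=a_{0,2}v^2\equiv 0$, that is $\sigma_3=0$. This corresponds to the cubic dropping degree, i.e. to a root at infinity in the projective sense, so we adopt that convention.

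\textbf{Step 2: the inequalities~(\ref{sigmaineq}).} These follow directly from~(\ref{ineq1})--(\ref{ineq2}).
\begin{itemize}
\item $\sigma_3=a_{0,2}\ge 0$ and $\sigma_0=b_{2,0}\ge0$ are immediate from~(\ref{ineq2}).
\item $\sigma_1-\sigma_3=a_{2,0}-b_{1,1}-a_{0,2}$ is nonnegative: by~(\ref{ineq1}), $a_{2,0}\ge -a_{1,1}-a_{0,2}$, so $\sigma_1-\sigma_3\ge -a_{1,1}-2a_{0,2}-b_{1,1}$. Each of $-a_{1,1}-2a_{0,2}$ and $-b_{1,1}\ge 2b_{2,0}$ is $\ge0$ by~(\ref{ineq2}).
\item $\sigma_2-\sigma_0=b_{0,2}-a_{1,1}-b_{2,0}$ is nonnegative by the symmetric argument: $b_{0,2}\ge -b_{2,0}-b_{1,1}$ gives $\sigma_2-\sigma_0\ge -a_{1,1}-b_{1,1}-2b_{2,0}\ge 0$.
\end{itemize}

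\textbf{Step 3: nonnegativity of the roots (main step).} Take $\eta<0$ and write $\eta=-s$ with $s>0$. The left-hand side of~(\ref{cubicinvlinesigma}) becomes
\[
-(\sigma_3 s^3+\sigma_2 s^2+\sigma_1 s+\sigma_0).
\]
All four $\sigma_i$ are nonnegative by~(\ref{sigmaineq}), so this expression is $\le 0$, with equality only if all $\sigma_i$ vanish. If all $\sigma_i=0$, the inequalities force $b_{2,0}=a_{0,2}=0$ and $a_{2,0}=b_{1,1}$, $b_{0,2}=a_{1,1}$. Tracing equality through Step~2 then forces $a_{1,1}=b_{1,1}=0$, hence all six coefficients vanish. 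The system is then identically zero, which is excluded because both species must react. Hence every real root satisfies $\eta\ge0$; this is the sign-alternation (Descartes) argument.

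\textbf{Step 4: counting the lines.} A real cubic has one or three real roots counted with multiplicity, and a degenerate cubic still yields at least one line. If $\sigma_3=0$, the vertical line $\eta=\infty$ is itself invariant. If $\sigma_3=\sigma_2=0$, then~(\ref{sigmaineq}) forces $\sigma_0=0$, and $\eta=0$ together with $\eta=\infty$ gives lines. So the number of distinct invariant lines is one, two or three.

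The delicate part is Step~3, specifically the degenerate case in which every $\sigma_i$ vanishes. It is handled by tracing the equalities back through the inequalities of Step~2, and by adopting the convention that $\eta=\infty$ is counted as a nonnegative (non-decreasing) slope.
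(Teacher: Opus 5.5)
Your proof is correct and takes the paper's route. You substitute $v=\eta u$ to get the cubic; your $\dot v-\eta\dot u=0$ form avoids the paper's division by $\dot u$. You then derive (\ref{sigmaineq}) from (\ref{ineq1})--(\ref{ineq2}) by the same chain of inequalities, and rule out negative roots by the sign check behind the paper's appeal to Descartes' rule.

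One small correction. The all-zero case is not excluded by the requirement that both species react: $X+Y\to 2X$ and $X+Y\to 2Y$ with equal rates give $\dot x=\dot y\equiv 0$. It should instead be discarded as a degenerate system, which is the nondegeneracy the paper's Descartes argument tacitly assumes.
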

\begin{proof}
Introducing new variables~(\ref{uvvariables}),
the planar ODE system (\ref{quadraticchemical1})--(\ref{quadraticchemical2})
transforms to the ODE system~(\ref{homODE1})--(\ref{homODE2}). Dividing equation~(\ref{homODE2}) by equation~(\ref{homODE1}), we obtain
\begin{equation}
\frac{\mbox{{\rm d}}v}
{\mbox{{\rm d}}u}
=
\frac{
b_{2,0} \, u^2
\,+\,
b_{1,1} \, u \, v
\,+\,
b_{0,2} \, v^2}
{a_{2,0} \, u^2
\,+\,
a_{1,1} \, u \, v
\,+\,
a_{0,2} \, v^2}
\,.
\label{homODE12}
\end{equation}
The invariant lines~(\ref{invlines}) in the new variables take the form $v\,=\,\eta\,u$. Substituting into equation~(\ref{homODE12}), we obtain that $v\,=\,\eta\,u$ is an invariant line if and only if
$$
\eta
=
\frac{
b_{2,0} 
\,+\,
b_{1,1} \, \eta
\,+\,
b_{0,2} \, \eta^2}
{a_{2,0} 
\,+\,
a_{1,1} \, \eta
\,+\,
a_{0,2} \, \eta^2}\,,
$$
which can be rewritten as
\begin{equation}
a_{0,2} \, \eta^3
\,+\,
\big(a_{1,1}-b_{0,2}\big) \, \eta^2
\,+\,
\big(a_{2,0}-b_{1,1}\big) \, \eta
\,-\,
b_{2,0} 
\,=\,
0\,.
\label{cubicinvline}    
\end{equation}
Substituting formulas~(\ref{sigmavalues}) for coefficients $\sigma_i\,,$ $i=1,2,3,4$, into equation~(\ref{cubicinvline}), we obtain~(\ref{cubicinvlinesigma}). Using inequalities~(\ref{ineq1})--(\ref{ineq2}), we obtain
\begin{eqnarray*}
\sigma_3 &=& a_{0,2} \,\ge\, 0 \,,
\\
\sigma_2 &=& b_{0,2} - a_{1,1}
\,\ge\, - b_{2,0} - b_{1,1}\,\ge\,b_{2,0}\,=\,\sigma_0
\,,
\\
\sigma_1 &=& a_{2,0}-b_{1,1} 
\,\ge\,-a_{1,1}-a_{0,2}\,\ge\,a_{0,2}\,=\,\sigma_3\,,
\\
\sigma_0 &=& b_{2,0}\,\ge\,0\,,
\end{eqnarray*}
which establishes inequalities~(\ref{sigmaineq}). The non-negativity condition~(\ref{nonnegcond}) follows from inequalities~(\ref{sigmaineq}) by applying Descartes' rule of signs.
\end{proof}

\subsection{Chemical triangle}

\label{sec32}

Next, we adapt the approach of Newton~\cite{Newton:1978:TDH}, who introduces a two-parameter family of systems, together with the so-called zero-derivative and undefined-derivative lines, to classify general ODE systems. Since chemical systems impose additional sign restrictions on the coefficients, we introduce a different pair of parameters from those used by Newton to simplify the subsequent analysis. We define nonnegative parameters $\lambda$ and $\mu$ by
\begin{equation}
\lambda \,=\, - \,a_{1,1}
\qquad
\mbox{and}
\qquad
\mu \,=\, - \,b_{1,1}\,.
\label{defmunu}
\end{equation}
Using~(\ref{sigmavalues}) and notation~(\ref{defmunu}), the ODE system~(\ref{homODE1})--(\ref{homODE2}) can be rewritten as
\begin{eqnarray}
\frac{\mbox{{\rm d}}u}{\mbox{{\rm d}}t}
\,& = &\,
(\sigma_1-\mu) \, u^2
\,-\,
\lambda \, u \, v
\,+\,
\sigma_3 \, v^2 \,,
\label{homODE1lambdamu}
\\
\frac{\mbox{{\rm d}}v}{\mbox{{\rm d}}t}
\,& = &\,
\sigma_0 \, u^2
\,-\,
\mu \, u \, v
\,+\,
(\sigma_2 - \lambda) \, v^2 \,.
\label{homODE2lambdamu}
\end{eqnarray}

\vskip 2mm

\begin{lemma}
Consider nonnegative parameters $\lambda$, $\mu$, $\sigma_i\,,$ $i=1,2,3,4$, satisfying inequalities~$(\ref{sigmaineq})$. Then the planar ODE system~$(\ref{homODE1lambdamu})$--$(\ref{homODE2lambdamu})$ is a representation of the chemical system~$(\ref{quadraticchemical1})$--$(\ref{quadraticchemical2})$ provided that 
\begin{equation}
\lambda \ge 2 \sigma_3\,,
\qquad
\mu \ge 2 \sigma_0
\qquad
\mbox{and}
\qquad
\lambda+\mu \le \min\big\{\sigma_0 + \sigma_2,\,
\sigma_1 + \sigma_3\big\}\,.
\label{chemicaltriangle}
\end{equation}
If $\sigma_3 \ne 0$, then the invariant lines through the equilibrium point $[1,1]$ of the form~$(\ref{invlines})$ are given by $\eta$
solving the cubic equation $(\ref{cubicinvlinesigma})$. Let $\eta_i$ be a solution of the cubic equation $(\ref{cubicinvlinesigma})$. Consider function
\begin{equation}
f(\eta)
=
\frac{
\sigma_0
\,-\,
\mu \, \eta
\,+\,
(\sigma_2-\lambda) \, \eta^2}
{\sigma_1 - \mu
\,-\,
\lambda \, \eta
\,+\,
\sigma_3 \, \eta^2}\,.
\label{deffeta}
\end{equation} 
Then $f^\prime(\eta_i)=0$ on the line $Z[\eta_i]$ in $(\lambda,\mu)$-parameter space given by
\begin{equation}
\eta_i \lambda + \mu 
\,=\, 2 \eta_i 
(\sigma_2 - \sigma_3 \eta_i)\,,
\label{zeroderline}
\end{equation}
and $f^\prime(\eta_i)$ is undefined on the line $U[\eta_i]$ in $(\lambda,\mu)$-parameter space given by
\begin{equation}
\eta_i \lambda + \mu 
\,=\, \sigma_1 + \sigma_3 \eta_i^2.
\label{undefline}
\end{equation}
\label{lemma3}
\end{lemma}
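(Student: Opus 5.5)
The plan is a direct computation in three parts, matching the three claims of Lemma~\ref{lemma3}.

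\emph{First part: the chemical triangle.} Write the six coefficients in terms of $\lambda,\mu,\sigma_i$ by comparing (\ref{homODE1})--(\ref{homODE2}) with (\ref{homODE1lambdamu})--(\ref{homODE2lambdamu}):
$$a_{1,1}=-\lambda,\quad b_{1,1}=-\mu,\quad a_{0,2}=\sigma_3,\quad b_{2,0}=\sigma_0,\quad a_{2,0}=\sigma_1-\mu,\quad b_{0,2}=\sigma_2-\lambda .$$
These are consistent with (\ref{sigmavalues}) and (\ref{defmunu}). Substituting them into (\ref{ineq2}) gives exactly $\lambda\ge 2\sigma_3\ge0$ and $\mu\ge 2\sigma_0\ge 0$. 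Substituting into (\ref{ineq1}) gives $\sigma_1+\sigma_3-\lambda-\mu\ge 0$ and $\sigma_0+\sigma_2-\lambda-\mu\ge0$, which together form the third condition in (\ref{chemicaltriangle}). Hence (\ref{chemicaltriangle}) is equivalent to (\ref{ineq1})--(\ref{ineq2}). Lemma~\ref{lemma1} then shows that the system, with $a_{0,0},\dots,b_{0,1}$ given by (\ref{othcoef1})--(\ref{othcoef6}), is a chemical system shifted to $[1,1]$. I would also check that (\ref{chemicaltriangle}) forces (\ref{sigmaineq}). For example, $\sigma_1+\sigma_3\ge\lambda+\mu\ge 2\sigma_3$ gives $\sigma_1\ge\sigma_3$, and $\sigma_2\ge\sigma_0$ follows in the same way.

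\emph{Second part: invariant lines.} This is Lemma~\ref{lemma2} restated in the new notation. The condition $\sigma_3\neq0$ ensures that the cubic (\ref{cubicinvlinesigma}) has genuine degree three. It also excludes the vertical direction $\eta=\infty$: putting $u=0$ in (\ref{homODE1lambdamu}) gives $\dot u=\sigma_3 v^2\neq 0$, so $u=0$ is not invariant. Therefore every invariant line through $[1,1]$ is of the form (\ref{invlines}) with finite $\eta$ solving the cubic.

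\emph{Third part: the derivative of $f$.} Write $f=N/D$, where $N(\eta)=\sigma_0-\mu\eta+(\sigma_2-\lambda)\eta^2$ and $D(\eta)=\sigma_1-\mu-\lambda\eta+\sigma_3\eta^2$. By the proof of Lemma~\ref{lemma2}, $\eta_i$ satisfies $N(\eta_i)=\eta_i D(\eta_i)$. Using this identity,
$$f'(\eta_i)=\frac{N'(\eta_i)D(\eta_i)-N(\eta_i)D'(\eta_i)}{D(\eta_i)^2}=\frac{N'(\eta_i)-\eta_i D'(\eta_i)}{D(\eta_i)} .$$
Expanding the numerator gives
$$N'(\eta_i)-\eta_iD'(\eta_i) = -\mu-\lambda\eta_i+2\eta_i(\sigma_2-\sigma_3\eta_i).$$
So the numerator vanishes exactly on the line (\ref{zeroderline}). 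The expression is undefined exactly when $D(\eta_i)=0$, i.e.\ $\eta_i\lambda+\mu=\sigma_1+\sigma_3\eta_i^2$, which is (\ref{undefline}).

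The only delicate point, and the main obstacle, is interpreting $f'(\eta_i)=0$ consistently. I will treat $\eta_i$ as fixed while $(\lambda,\mu)$ varies, so both conditions are linear in $(\lambda,\mu)$. The statement that $f'(\eta_i)=0$ on $Z[\eta_i]$ is then understood away from the intersection $Z[\eta_i]\cap U[\eta_i]$, where $D(\eta_i)\neq0$. On $U[\eta_i]$ we have $D(\eta_i)=0$, so also $N(\eta_i)=0$ and $f$ is of the indeterminate form $0/0$ there. I would note explicitly that this is what ``undefined'' means in the lemma.
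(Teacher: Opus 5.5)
Your proposal is correct and is essentially the paper's proof: it uses the same substitution of the coefficients into (\ref{ineq1})--(\ref{ineq2}) to obtain the chemical triangle, and the same differentiation of $f$ at a root of the cubic, where your use of $N(\eta_i)=\eta_i D(\eta_i)$ in place of the paper's $f(\eta_i)=\eta_i$ is slightly more careful since it does not presuppose $D(\eta_i)\neq 0$. One small refinement: $Z[\eta_i]$ and $U[\eta_i]$ have the same normal vector $(\eta_i,1)$, so they are parallel and coincide precisely when $\eta_i$ is a multiple root of (\ref{cubicinvlinesigma}); your caveat about their intersection is therefore either vacuous or concerns the whole line.
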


\begin{proof}
Using inequalities~(\ref{ineq1})--(\ref{ineq2}), we obtain inequalities~(\ref{chemicaltriangle}). Since $\eta_i$ is a solution of the cubic equation~(\ref{cubicinvlinesigma}), we have $f(\eta_i)=\eta_i$. Differentiating~(\ref{deffeta}) and substituting $f(\eta_i)=\eta_i$, we obtain
$$
f^\prime(\eta_i)
=
\frac{
- \, \eta_i \lambda \, - \,\mu
\,+\,
2 \sigma_2 \, \eta_i
-
2 \sigma_3 \, \eta^2_i
}
{
- \, \eta_i \lambda \, - \,\mu
\,+\,
\sigma_1 
\,+\,
\sigma_3 \, \eta_i^2}\,,
\quad
\mbox{which implies~(\ref{zeroderline}) and~(\ref{undefline})}.
$$
\vskip -7mm
\end{proof}

\vskip 2mm

\noindent
The inequalities~(\ref{chemicaltriangle}) describe a triangle in the $(\lambda,\mu)$-parameter plane. We will call it the {\it chemical triangle} and denote it by
\begin{equation}
{\mathcal T} (\sigma_0,\sigma_1,\sigma_2,\sigma_3)
\,=\,
\big\{
(\lambda,\mu) \in {\mathbb R}^2 \, \big|
\,
\lambda \; \mbox{and} \; \mu
\; \mbox{satisfy inequatilities~(\ref{chemicaltriangle})}
\big\}\,.
\label{chemicaltriangle2}
\end{equation}
The classification of the general ODE systems into sixteen geometric equivalence classes~\cite{Newton:1978:TDH} depend on the relative positions of the zero-derivative and undefined-derivative lines $Z[\eta_i]$ and $U[\eta_i]$, which we have defined in Lemma~\ref{lemma3} for $\sigma_3 \ne 0$. In particular, considering chemical systems, we need to establish the relative positions of these lines and the chemical triangle~${\mathcal T} (\sigma_0,\sigma_1,\sigma_2,\sigma_3)$. Considering a line given by $s_1 \lambda + s_2 \mu = s_3$ for $s_1, s_2, s_3 \in \mathbb{R}$, we say that the chemical triangle~${\mathcal T} (\sigma_0,\sigma_1,\sigma_2,\sigma_3)$ is {\it bounded above} by this line if all the points in the chemical triangle~${\mathcal T} (\sigma_0,\sigma_1,\sigma_2,\sigma_3)$ satisfy $s_1\lambda + s_2\mu \le s_3$. Similarly, we say that the chemical triangle~${\mathcal T} (\sigma_0,\sigma_1,\sigma_2,\sigma_3)$ is {\it bounded below} by the line $s_1\lambda + s_1\mu = s_3$ if all the points in the chemical triangle~${\mathcal T} (\sigma_0,\sigma_1,\sigma_2,\sigma_3)$ satisfy $s_1\lambda + s_2\mu \ge s_3$.

Following Newton~\cite{Newton:1978:TDH}, we will extend the definition of $Z[\eta_i]$ and $U[\eta_i]$ to the case $\sigma_3 = 0$, when one of the invariant lines is given by $x=1$, corresponding to $\eta=\infty$ in equation~(\ref{invlines}). We define 
\begin{equation}
Z[\infty] \; \mbox{as line} \; \lambda = 0
\quad \mbox{and} \quad
U[\infty] \; \mbox{as line} \; \lambda = \sigma_2.
\label{infinitycase}    
\end{equation}
We observe that the undefined-derivative line $U[\infty]$ does not intersect the interior of the chemical triangle~${\mathcal T} (\sigma_0,\sigma_1,\sigma_2,\sigma_3)$. A similar result can be proved for all values of $\eta_i$.

\vskip 2mm

\begin{lemma}
Let $\eta_i$ be a solution of the equation $(\ref{cubicinvlinesigma})$. Then the interior of the chemical triangle~${\mathcal T} (\sigma_0,\sigma_1,\sigma_2,\sigma_3)$ defined by~$(\ref{chemicaltriangle2})$ is bounded above by the undefined-derivative line $U[\eta_i]$. 
\label{lemma4}
\end{lemma}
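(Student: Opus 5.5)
The plan is to exploit linearity. For fixed $\eta_i$, the quantity $\eta_i\lambda+\mu$ is a linear function of $(\lambda,\mu)$. By Lemma~\ref{lemma2}, $\eta_i\ge 0$, so this function is non-decreasing in both $\lambda$ and $\mu$. The chemical triangle~${\mathcal T}(\sigma_0,\sigma_1,\sigma_2,\sigma_3)$ is the convex polygon cut out by $\lambda\ge 2\sigma_3$, $\mu\ge 2\sigma_0$ and $\lambda+\mu\le M$, where $M=\min\{\sigma_0+\sigma_2,\sigma_1+\sigma_3\}$. A linear function attains its maximum over this polygon at a vertex. Because the function is monotone in both variables, the maximum is attained at one of the two vertices on the hypotenuse, namely $(M-2\sigma_0,\,2\sigma_0)$ or $(2\sigma_3,\,M-2\sigma_3)$. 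If the triangle is empty there is nothing to prove. It therefore suffices to show that both vertex values are at most $\sigma_1+\sigma_3\eta_i^2$. (The case $\eta_i=\infty$, i.e. $\sigma_3=0$ with $U[\infty]$ given by $\lambda=\sigma_2$, is the observation made just before the lemma, since $\lambda\le M-2\sigma_0\le \sigma_2$.)

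For the vertex $(2\sigma_3,\,M-2\sigma_3)$, I will use $M\le\sigma_1+\sigma_3$. This gives
$$2\sigma_3\eta_i+M-2\sigma_3\le \sigma_1+\sigma_3(2\eta_i-1)=\sigma_1+\sigma_3\eta_i^2-\sigma_3(\eta_i-1)^2\le\sigma_1+\sigma_3\eta_i^2,$$
which uses only $\sigma_3\ge0$.

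For the vertex $(M-2\sigma_0,\,2\sigma_0)$, I will instead use $M\le\sigma_0+\sigma_2$, which bounds the value by $\eta_i(\sigma_2-\sigma_0)+2\sigma_0$. This step is the main one: here the fact that $\eta_i$ solves the cubic~(\ref{cubicinvlinesigma}) must enter.

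If $\eta_i=0$, the cubic forces $\sigma_0=0$, and the bound reads $0\le\sigma_1$, which holds by~(\ref{sigmaineq}). If $\eta_i>0$, I multiply the desired inequality by $\eta_i$. The cubic gives $\sigma_1\eta_i+\sigma_3\eta_i^3=\sigma_2\eta_i^2+\sigma_0$, so the inequality becomes
$$\eta_i^2(\sigma_2-\sigma_0)+2\sigma_0\eta_i\le \sigma_2\eta_i^2+\sigma_0 .$$
This is equivalent to $\sigma_0(\eta_i-1)^2\ge0$, which holds since $\sigma_0\ge0$.

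Combining the two vertex estimates, every point of ${\mathcal T}$ satisfies $\eta_i\lambda+\mu\le\sigma_1+\sigma_3\eta_i^2$. In other words, the triangle, and in particular its interior, is bounded above by the undefined-derivative line $U[\eta_i]$ of~(\ref{undefline}).
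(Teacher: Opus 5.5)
Your proof is correct and uses the same two key estimates as the paper. These are the nonnegative terms $\sigma_3(\eta_i-1)^2$, which comes from $\lambda\ge 2\sigma_3$ and $M\le\sigma_1+\sigma_3$, and $\sigma_0(\eta_i-1)^2$, which comes from $\mu\ge 2\sigma_0$, $M\le\sigma_0+\sigma_2$ and the cubic relation $\sigma_1\eta_i+\sigma_3\eta_i^3=\sigma_2\eta_i^2+\sigma_0$.

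The difference is only in how the argument is organized. The paper splits into $\eta_i\le 1$ and $\eta_i>1$ and uses one estimate per case. In each case it shows that a whole ray of $U[\eta_i]$ lies on or above the hypotenuse $\lambda+\mu=M$, which is why it needs the sign of $1-\eta_i$ or of $\eta_i^2-\eta_i$, and it leaves the ``which side'' step implicit. Your reduction to the two hypotenuse vertices makes both estimates valid for every nonnegative root. This removes the case split and gives $\eta_i\lambda+\mu\le\sigma_1+\sigma_3\eta_i^2$ on all of ${\mathcal T}$ directly.
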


\begin{proof}
We consider two cases: $\eta_i \le 1$ and $\eta_i > 1$.
For $\eta_i \le 1$, we deduce using~(\ref{undefline}) that any parameter pair $(\lambda,\mu)$ on the undefined-derivative line $U[\eta_i]$ satisfies
$$
\lambda+\mu
\,=\,
\eta_i \lambda + \mu
+ (1-\eta_i) \lambda
\,=\, \sigma_1 + \sigma_3 \eta_i^2
+ (1-\eta_i) \lambda
\,=\, \sigma_1 + \sigma_3 +
\sigma_3 (\eta_i^2-1)
+ (1-\eta_i) \lambda.
$$
Consequently, if $\lambda \ge 2 \sigma_3$, then since $1-\eta_i\ge0$, we have
$$
\lambda+\mu \,\ge\,  \sigma_1 + \sigma_3 +
\sigma_3 (\eta_i^2-1)
+ (1-\eta_i) 2 \sigma_3
\,=\,\sigma_1 + \sigma_3 +
\sigma_3 (\eta_i-1)^2
\,\ge\,\sigma_1 + \sigma_3\,\ge\,\min\big\{\sigma_0 + \sigma_2,\,
\sigma_1 + \sigma_3\big\}\,.
$$
For $\eta_i > 1$, note that~(\ref{undefline}) can be rewritten as
$$
\eta_i^2 \lambda + \eta_i\mu 
\,=\, \sigma_0 + \sigma_2 \eta_i^2.
$$
Thus, we deduce using~(\ref{undefline}) that any parameter pair $(\lambda,\mu)$ on the undefined-derivative line $U[\eta_i]$ satisfies
$$
\lambda+\mu
\,=\,
\frac{1}{\eta_i^2}\big(\eta_i^2 \lambda + \eta_i \mu
+ (\eta_i^2-\eta_i) \mu\big)
\,=\, \frac{1}{\eta_i^2}\big(\sigma_0 + \sigma_2 \eta_i^2
+ (\eta_i^2-\eta_i) \mu\big)
\,=\, 
\sigma_2
+
\frac{1}{\eta_i^2}\big(\sigma_0
+ (\eta_i^2-\eta_i) \mu\big).
$$
Consequently, if $\mu \ge 2 \sigma_0$, then since $\eta_i^2-\eta_i>0$, we have
$$
\lambda+\mu \,\ge\,  \sigma_2 
+
\frac{1}{\eta_i^2}\big(\sigma_0
+ (\eta_i^2-\eta_i)2\sigma_0\big)
\,=\,\sigma_0 + \sigma_2 + \frac{1}{\eta_i^2}\sigma_0(\eta_i-1)^2
\,\ge\,\sigma_0 + \sigma_2\,.
$$
In both cases, we have $\lambda + \mu \,\ge\,\min\big\{\sigma_0 + \sigma_2,\,
\sigma_1 + \sigma_3\big\}\,$, and thus, we conclude that all the points on the line $U[\eta_i]$ lie above or on the line $\lambda+\mu = \min\big\{\sigma_0 + \sigma_2,\,
\sigma_1 + \sigma_3\big\}\,$. Therefore, the chemical triangle~(\ref{chemicaltriangle2}) is bounded above by $U[\eta_i]$.
\end{proof}

\vskip 2mm

\begin{example}
\label{example5}
Consider the following system of $m=10$ chemical reactions:
\begin{equation}
\emptyset
\;
\mathop{\longrightarrow}^{1}
\;
X+Y \, ,
\;\quad
X
\;
\mathop{\longrightarrow}^{2}
\;
\emptyset \, ,
\;\quad
Y
\;
\mathop{\longrightarrow}^{2}
\;
\emptyset \, ,
\;\quad
X
\;
\mathop{\longrightarrow}^{4}
\;
Y\, ,
\;\quad
Y
\;
\mathop{\longrightarrow}^{4}
\;
X\, ,
\;\quad
X \, + \, Y 
\;
\mathop{\longrightarrow}^{12}
\;
\emptyset \,,
\label{crnex5a}
\end{equation}
\begin{equation}
2 X
\;
\mathop{\longrightarrow}^{5}
\;
3X \, ,
\qquad
2 Y
\;
\mathop{\longrightarrow}^{5}
\;
3 Y \, ,
\qquad
2 X
\;
\mathop{\longrightarrow}^{4}
\;
3 X + Y
\quad
\mbox{and}
\quad
2 Y
\;
\mathop{\longrightarrow}^{4}
\;
X + 3 Y \, .
\label{crnex5b}    
\end{equation}
Then the ODEs~(\ref{quadraticchemical1})--(\ref{quadraticchemical2}) become
\begin{eqnarray}
\frac{\mbox{{\rm d}}x}{\mbox{{\rm d}}t}
\,& = &\,
1
\,-\,
6 \, x
\,+\,
4 \, y
\,+\,
9 \, x^2
\,-\,
12 \, x \, y
\,+\,
4 \, y^2 \,,
\label{quadraticchemical1ex5}
\\
\frac{\mbox{{\rm d}}y}{\mbox{{\rm d}}t}
\,& = &\,
1
\,+\,
4 \, x
\,-\,
6 \, y
\,+\,
4 \, x^2
\,-\,
12 \, x \, y
\,+\,
9 \, y^2 \,,
\label{quadraticchemical2ex5}
\end{eqnarray}
which satisfy relations~(\ref{othcoef1})--(\ref{othcoef6}). Using~(\ref{sigmavalues}), the coefficients $\sigma_i$, $i=0,1,2,3,$ are
$$
\sigma_3 = 4\,,
\qquad
\sigma_2 = 21\,,
\qquad
\sigma_1 = 21\,
\qquad\mbox{and}\qquad
\sigma_0 = 4\,. 
$$
The solutions of cubic polynomial~(\ref{cubicinvlinesigma}) are
\begin{equation}
\eta_1 = \frac{1}{4}\,,
\qquad
\eta_2 = 1
\quad
\mbox{and}
\quad
\eta_3 = 4.
\label{etavalues}
\end{equation}
The phase portrait is plotted in Figure~\ref{figure2}(a), where we plot the three invariant lines going through the equilibrium point [1,1] as black dashed lines. They divide the non-negative quadrant $\overline{\Omega}$ into six sectors: two hyperbolic sectors and four parabolic sectors. The ODE system~(\ref{quadraticchemical1ex5})--(\ref{quadraticchemical2ex5}) can be also rewritten in the form~(\ref{homODE1lambdamu})--(\ref{homODE2lambdamu}), where
$
\lambda = \mu = 12.
$
These values satisfy inequalities~(\ref{chemicaltriangle}), which describe the chemical triangle~~${\mathcal T} (\sigma_0,\sigma_1,\sigma_2,\sigma_3)$ in the $(\lambda,\mu)$-parameter plane. This triangle is visualized as the red triangle in Figure~\ref{figure2}(b) together with the lines $Z[\eta_i]$ and $U[\eta_i]$ defined by~(\ref{zeroderline}) and~(\ref{undefline}). 
\end{example}

\begin{figure}
\rule{0pt}{1pt}
\vskip 1mm
\rule{0pt}{1pt}
\hskip -1.5cm
\epsfig{file=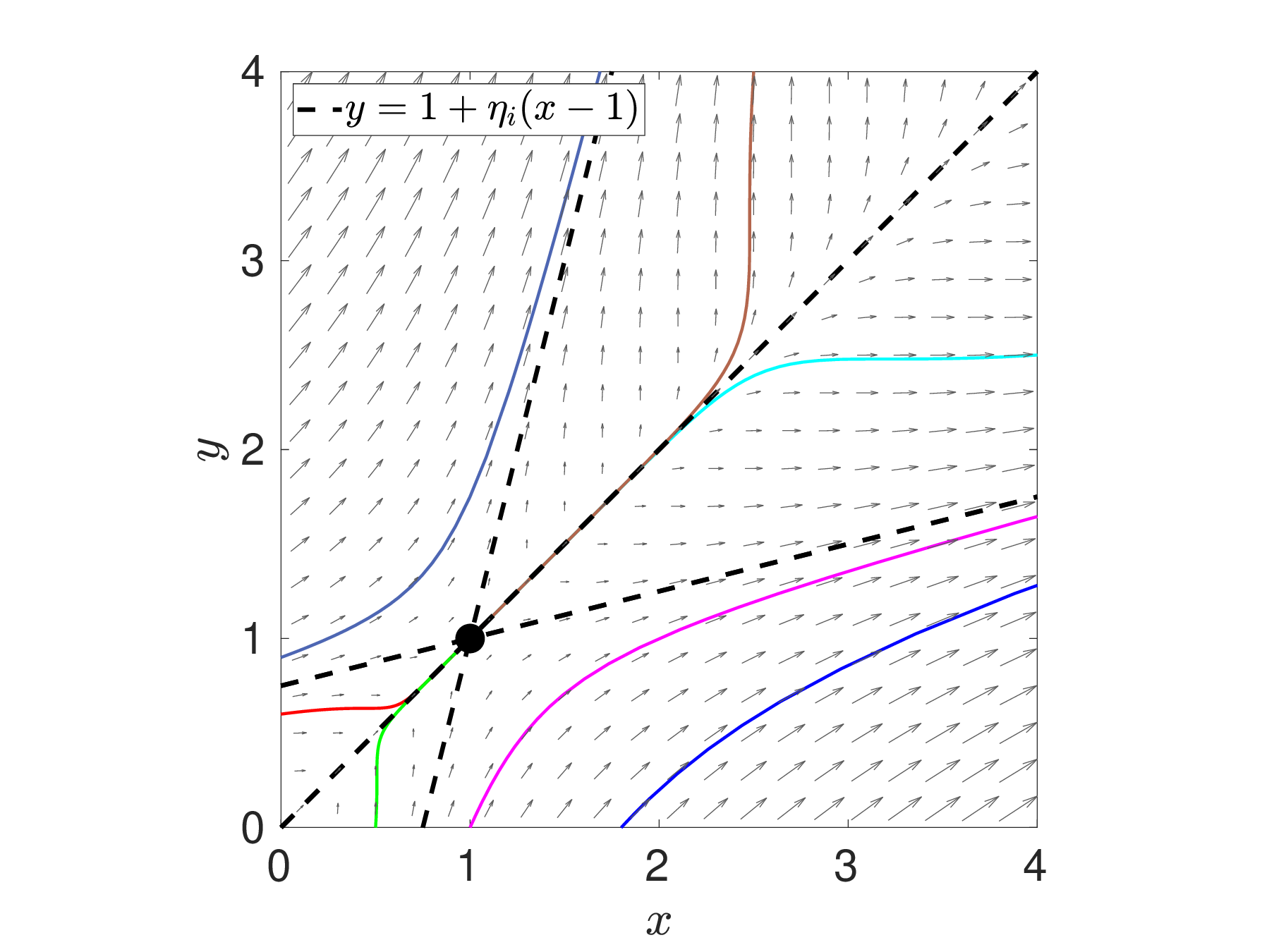,height=7.4cm}
\hskip -1.5cm
\epsfig{file=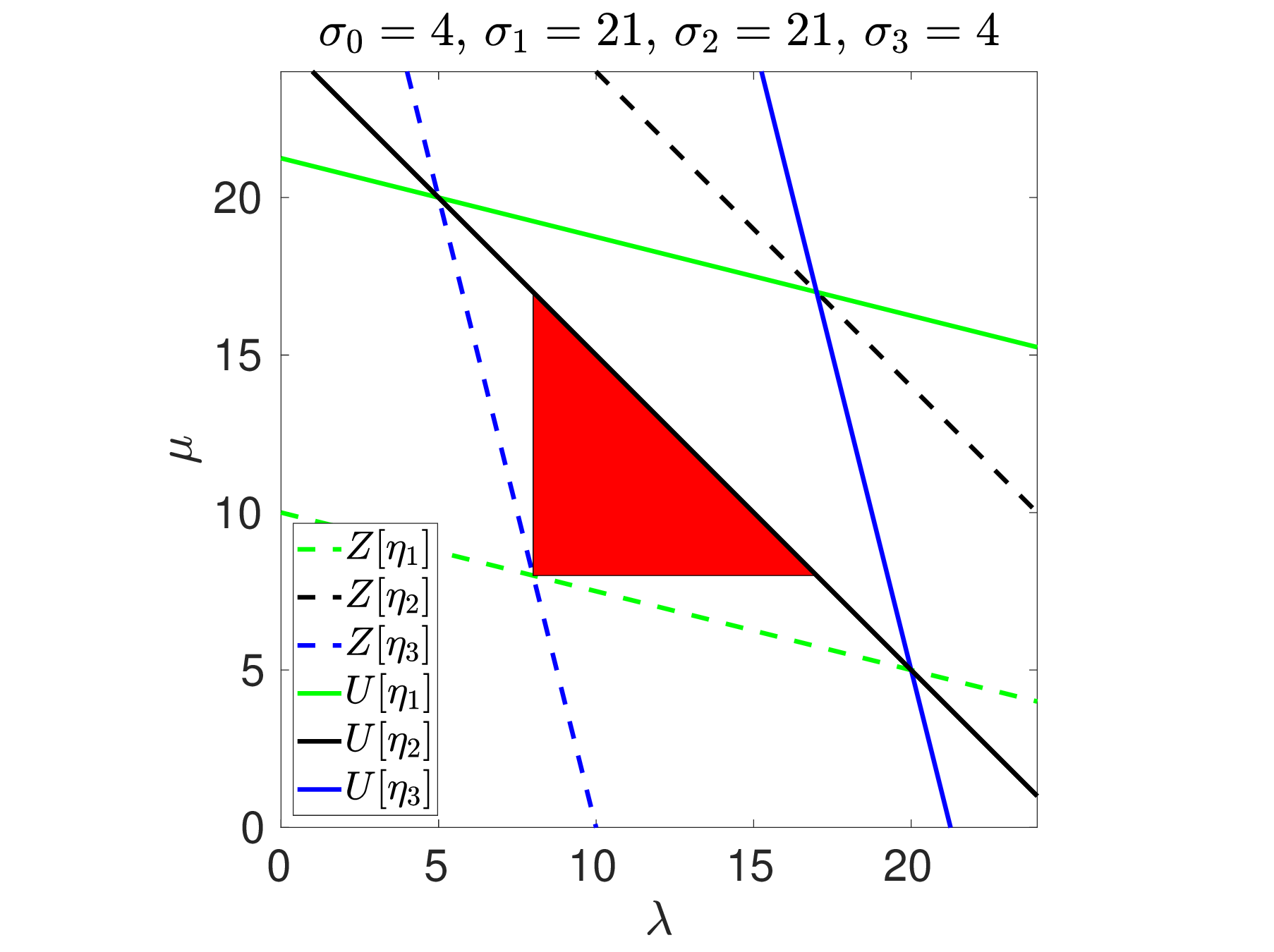,height=7.4cm}
\vskip -7.6cm
(a) \hskip 76mm (b)
\vskip 7.1cm
\caption{{\it {\rm (a)} The phase plane of the ODE system~$(\ref{quadraticchemical1ex5})$--$(\ref{quadraticchemical2ex5})$ describing the chemical reaction network~$(\ref{crnex5a})$--$(\ref{crnex5b})$. We plot seven illustrative trajectories in the non-negative quadrant $\overline{\Omega}$ together with the invariant lines $y=1 + \eta_i(x-1),$ where their slopes $\eta_i$, $i=1,2,3,$ are given by~$(\ref{etavalues})$. 
\hfill\break {\rm (b)} The chemical triangle~$(\ref{chemicaltriangle2})$ in the $(\lambda,\mu)$-parameter space is visualized as the red triangle together with the zero-derivative lines $Z[\eta_i]$ given by~$(\ref{zeroderline})$ and undefined-derivative lines $U[\eta_i]$ given by~$(\ref{undefline})$. 
}}
\label{figure2}
\end{figure}

\vskip 3mm

\noindent
In Figure~\ref{figure2}(b), we observe that the lines $U[\eta_i]$ and $Z[\eta_i]$ do not intersect with the interior of the chemical triangle ${\mathcal T} (\sigma_0,\sigma_1,\sigma_2,\sigma_3)$. While this result has been established for lines $U[\eta_i]$ in its full generality in Lemma~\ref{lemma3}, there exist chemical systems which have one line $Z[\eta_i]$ intersecting with the interior of the chemical triangle ${\mathcal T} (\sigma_0,\sigma_1,\sigma_2,\sigma_3)$. In order to study the relative positions of the zero-derivative lines $Z[\eta_i]$ and the chemical triangle~${\mathcal T} (\sigma_0,\sigma_1,\sigma_2,\sigma_3)$, we first study the case when the equation~$(\ref{cubicinvlinesigma})$ is a cubic equation with three distinct real roots.

\vskip 2mm

\begin{lemma}
Let $\sigma_3 \ne 0$ and consider that the cubic equation~$(\ref{cubicinvlinesigma})$ has three real solutions
$$
\eta_1 < \eta_2 < \eta_3.
$$
Then the chemical triangle~$(\ref{chemicaltriangle2})$ is non-empty, i.e. ${\mathcal T} (\sigma_0,\sigma_1,\sigma_2,\sigma_3) \ne \emptyset$, if and only if one of the following three conditions holds \vskip 2mm
\noindent \rule{0pt}{1pt} \hskip 1cm {\rm (i)} $\;\;\; \eta_2 = 1\,$, \vskip 1mm
\noindent \rule{0pt}{1pt} \hskip 0.95cm {\rm (ii)} $\;\;\, \eta_2>1$ and
\begin{equation}
\eta_1 \le \frac{\eta_2 + \eta_3 -2}{\eta_2 \eta_3 -1}\,,
\label{restrictioneta2g1}
\end{equation}
\rule{0pt}{1pt} \hskip 0.9cm {\rm (iii)} $\;\;\eta_2 < 1$ and 
\begin{equation}
\eta_3 \ge \frac{1 - \eta_1 \eta_2}{\eta_1 + \eta_2 - 2 \eta_1 \eta_2}\,.
\label{restrictioneta2l1}
\end{equation}
\vskip 1mm
\noindent
In each case, the chemical triangle~${\mathcal T} (\sigma_0,\sigma_1,\sigma_2,\sigma_3)$ defined by~$(\ref{chemicaltriangle2})$ lies on the side of $U[\eta_2]$ opposite the line $Z[\eta_2]$ and the same side of $U[\eta_i]$ as the line $Z[\eta_i]$ for $i = 1,3$. Moreover, the interior of the chemical triangle~${\mathcal T} (\sigma_0,\sigma_1,\sigma_2,\sigma_3)$ cannot by intersected by both lines $Z[\eta_1]$ and $Z[\eta_3]$ at the same time.
\label{lemma5}
\end{lemma}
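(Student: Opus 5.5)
The plan is to use Vieta's formulas to turn every condition into a statement about the roots. Since $\sigma_3\neq0$, write $p(\eta)=\eta^3-e_1\eta^2+e_2\eta-e_3=(\eta-\eta_1)(\eta-\eta_2)(\eta-\eta_3)$, where $e_1=\sigma_2/\sigma_3$, $e_2=\sigma_1/\sigma_3$, $e_3=\sigma_0/\sigma_3$. By Lemma~\ref{lemma2}, all $\eta_i\ge 0$.

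\textbf{Step 1: nonemptiness as two root inequalities.} The chemical triangle~(\ref{chemicaltriangle}) is nonempty if and only if its corner $(2\sigma_3,2\sigma_0)$ satisfies the hypotenuse inequality. After dividing by $\sigma_3$, this becomes
$$A:=e_1-e_3-2\ge0 \qquad\mbox{and}\qquad B:=e_2-2e_3-1\ge0 .$$
Condition (\ref{restrictioneta2g1}), after multiplying out (valid when $\eta_2\eta_3>1$), is exactly $A\ge 0$. Condition (\ref{restrictioneta2l1}) is exactly $B\ge0$ (valid when $\eta_1+\eta_2-2\eta_1\eta_2>0$, which holds for $\eta_1<\eta_2<1$). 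The key identity linking them is
$$p(1)=(1-\eta_1)(1-\eta_2)(1-\eta_3)=B-A .$$

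\textbf{Step 2: case analysis on $\eta_2$.}

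If $\eta_2=1$, then $p(1)=0$, so $A=B$. Moreover $A=-(1-\eta_1)(1-\eta_3)=(1-\eta_1)(\eta_3-1)>0$, so the triangle is nonempty.

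If $\eta_2>1$, I first show $\eta_1<1$. If all roots exceeded $1$, then $A=\eta_1+\eta_2+\eta_3-\eta_1\eta_2\eta_3-2<0$: it vanishes at $(1,1,1)$ and each partial derivative $1-\eta_j\eta_k$ is negative. So $\eta_1<1<\eta_2<\eta_3$, hence $p(1)>0$ and $B>A$. Nonemptiness is therefore equivalent to $A\ge0$, which is (\ref{restrictioneta2g1}).

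If $\eta_2<1$, the argument is symmetric. I show $\eta_3>1$: if all roots were below $1$, then $B<0$, by the analogous monotonicity argument or via the substitution $\eta\mapsto 1/\eta$, which swaps the roles of $A$ and $B$. Then $p(1)<0$, so $A>B$, and nonemptiness is equivalent to $B\ge 0$, which is (\ref{restrictioneta2l1}).

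In every case, $\eta_1<1<\eta_3$ whenever the triangle is nonempty. This fact is reused in Step 4.

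\textbf{Step 3: relative positions of $Z[\eta_i]$ and $U[\eta_i]$.} Both lines have the form $\eta_i\lambda+\mu=\mathrm{const}$, so they are parallel. Their constants differ by
$$\sigma_1+\sigma_3\eta_i^2-2\eta_i(\sigma_2-\sigma_3\eta_i)=\sigma_3\,p'(\eta_i).$$
For simple ordered roots, $p'(\eta_1),p'(\eta_3)>0$ and $p'(\eta_2)<0$. Hence $Z[\eta_1]$ and $Z[\eta_3]$ lie below their $U$ lines, while $Z[\eta_2]$ lies above $U[\eta_2]$. Lemma~\ref{lemma4} says the triangle lies below every $U[\eta_i]$, which gives the stated side relations.

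\textbf{Step 4: $Z[\eta_1]$ and $Z[\eta_3]$ cannot both cut the interior.} This is the step I expect to be the main obstacle. Let $L_i=\eta_i\lambda+\mu$. Because $\eta_i\ge0$, the minimum of $L_i$ over the triangle is attained at the corner $(2\sigma_3,2\sigma_0)$. So $Z[\eta_i]$ can meet the interior only if its constant strictly exceeds $L_i$ at that corner. Using Vieta, I compute
$$2\eta_i(\sigma_2-\sigma_3\eta_i)-2\sigma_3\eta_i-2\sigma_0=2\sigma_3\big(\eta_i(\eta_j+\eta_k)-\eta_i-\eta_1\eta_2\eta_3\big)=-2\sigma_3\,\eta_i(1-\eta_j)(1-\eta_k),$$
where $\{i,j,k\}=\{1,2,3\}$. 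For $i=1$ to cut the interior we need $(1-\eta_2)(1-\eta_3)<0$, i.e. $\eta_2<1<\eta_3$. For $i=3$ we need $(1-\eta_1)(1-\eta_2)<0$, i.e. $\eta_1<1<\eta_2$. These two requirements are incompatible, so at most one of $Z[\eta_1]$, $Z[\eta_3]$ meets the interior. The delicate points are checking the minimum-vertex claim carefully and tracking strict versus non-strict inequalities on the boundary.
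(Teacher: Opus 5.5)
Your proof is correct. Steps 1, 2 and 4 follow the paper's structure: reduce nonemptiness to the corner $(2\sigma_3,2\sigma_0)$ satisfying the hypotenuse inequality, translate via Vieta, and test $Z[\eta_1]$, $Z[\eta_3]$ against that corner. You also supply justification the paper omits. The identity $p(1)=B-A$ explains why only one of the two inequalities (\ref{auxineq2}) is binding in cases (ii) and (iii); the paper simply asserts the rewriting. Your factorisation $-2\sigma_3\eta_i(1-\eta_j)(1-\eta_k)$ makes Step 4 independent of the fact $\eta_1<1<\eta_3$, so your remark that this fact is reused there is unnecessary.

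The genuine difference is Step 3. The paper invokes Newton's Proposition 5: the three $U$-lines form a triangle, and $Z[\eta_j]$ passes through $U[\eta_i]\cap U[\eta_k]$. It then locates each vertex relative to the third $U$-line. That computation divides by $\eta_j$, so it needs separate care when $\eta_1=0$. You instead note that $Z[\eta_i]$ and $U[\eta_i]$ are parallel, with constants differing by $\sigma_3 p'(\eta_i)$. The sign of $p'$ at simple ordered roots is $+,-,+$, and combining this with Lemma~\ref{lemma4} gives the side relations. Your route is shorter and self-contained. The paper's route yields the full geometry of the $U$-triangle, which it reuses in Lemma~\ref{lemma6}.

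A few small points to tidy:
\begin{itemize}
\item In the case $\eta_2>1$ you do not actually show $\eta_1<1$, which need not hold (e.g.\ roots $1.05,1.1,1.2$ satisfy (\ref{sigmaineq})). What you show is that $\eta_1\ge 1$ forces $A<0$, so both sides of the equivalence fail; phrase it that way.
\item Your partial-derivative argument degenerates at $(1,1,1)$, where the partials vanish. Writing $\eta_i=1+t_i$ gives $A=-(t_1t_2+t_1t_3+t_2t_3+t_1t_2t_3)$, which settles it directly.
\item The substitution $\eta\mapsto 1/\eta$ requires $\eta_1>0$, so use the direct argument when $\eta_1=0$.
\item State $\sigma_3>0$ (from Lemma~\ref{lemma2} together with $\sigma_3\ne 0$). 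You need it both when dividing by $\sigma_3$ in Step 1 and when reading off the sign of $\sigma_3 p'(\eta_i)$ in Step 3.
\end{itemize}
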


\begin{proof}
Using~(\ref{chemicaltriangle}), we obtain that the chemical triangle~(\ref{chemicaltriangle2}) is non-empty if and only if
$$
2 \sigma_3 + 2 \sigma_0
\,\le\,
\min\big\{\sigma_0 + \sigma_2,\,
\sigma_1 + \sigma_3\big\}\,,
$$
which can be rewritten as two inequalities
\begin{equation}
2 \sigma_3 + \sigma_0
\,\le\,
\sigma_2\,,
\qquad
\mbox{and}
\qquad
\sigma_3 + 2 \sigma_0
\,\le\,
\sigma_1\,.
\label{auxineq}
\end{equation}
Since $\eta_i$, $i=1,2,3$, are roots of the cubic equation~(\ref{cubicinvlinesigma}), we have
\begin{equation}
\sigma_2 = \sigma_3 (\eta_1 + \eta_2 + \eta_3)\,,
\qquad
\sigma_1 = \sigma_3 (\eta_1\eta_2 + \eta_1 \eta_3 + \eta_2 \eta_3)\,,
\quad
\mbox{and}
\quad
\sigma_0 = \sigma_3 \eta_1 \eta_2 \eta_3\,.
\label{sig210expr}
\end{equation}
Substituting into~(\ref{auxineq}), we get two inequalities
\begin{equation}
2  + \eta_1 \eta_2 \eta_3
\,\le\,
\eta_1 + \eta_2 + \eta_3
\qquad
\mbox{and}
\qquad
1 + 2 \eta_1 \eta_2 \eta_3
\,\le\,
\eta_1\eta_2 + \eta_1 \eta_3 + \eta_2 \eta_3\,.
\label{auxineq2}
\end{equation}
If $\eta_2=1$, then both inequalities (\ref{auxineq2}) simplify to
$0 \le (1-\eta_1)(\eta_3-1)$,
which is valid for all $\eta_1<1$ and $\eta_3>1.$ If $\eta_2 > 1$, then inequalities (\ref{auxineq2}) can be rewritten as the restriction~(\ref{restrictioneta2g1}) on $\eta_1$ from above. If $\eta_2 < 1$, then inequalities (\ref{auxineq2}) can be rewritten as the restriction~(\ref{restrictioneta2l1}) on $\eta_3$ from below. 

Using~\cite[Proposition 5]{Newton:1978:TDH}, we know that the lines $U[\eta_1],$ $U[\eta_2]$ and $U[\eta_3]$ form a triangle of nonzero area, and line $Z[\eta_j]$ passes through the intersection of lines $U[\eta_i]$ and $U[\eta_k]$ as $(i,j,k)$ cycles through $(1,2,3)$ (this result is also illustrated in Figure~\ref{figure2}(b)). Let the coordinates of the intersection between $U[\eta_i]$ and $U[\eta_k]$ be $(\mu_{ik},\lambda_{ik})$. 
Using~(\ref{undefline}), we obtain 
$$
\lambda_{ik}\,=\,\sigma_3(\eta_i+\eta_k)
\qquad
\mbox{and} 
\qquad
\mu_{ik}=\sigma_1 - \sigma_3 \eta_i \eta_k\,.
$$
Substituting $\mu_{ik}$ into $U[\eta_j]$, we get $\lambda = \sigma_3 (\eta_j^2 + \eta_i\eta_k)/\eta_j$
at $\mu = \mu_{ik}$. Subtracting this $\lambda$ from $\lambda_{ik}$ gives $\sigma_3 (\eta_j-\eta_i)(\eta_k-\eta_j)/\eta_j$, which is negative for $j = 1$ and $j=3$, while it is positive for $j = 2$. Therefore, the intersection of lines $U[\eta_2]$ and $U[\eta_3]$ lies below 
$U[\eta_1]$, the intersection of lines $U[\eta_1]$ and $U[\eta_2]$ lies below $U[\eta_3]$, and the intersection of lines $U[\eta_1]$ and $U[\eta_3]$ lies above $U[\eta_2]$. Since the chemical triangle~${\mathcal T} (\sigma_0,\sigma_1,\sigma_2,\sigma_3)$ defined by~$(\ref{chemicaltriangle2})$ lies below $U[\eta_2]$, it is on the side of $U[\eta_2]$ opposite the line $Z[\eta_2]$ and on the same sides of $U[\eta_1]$ and $U[\eta_3]$ as $Z[\eta_1]$ and $Z[\eta_3]$, respectively.

Finally, let us assume that both lines $Z[\eta_1]$ and $Z[\eta_3]$ intersect the chemical triangle~${\mathcal T} (\sigma_0,\sigma_1,\sigma_2,\sigma_3)$ in interior points. Then the line $Z[\eta_1]$ must intersect the line $\lambda=2\sigma_3$ at point $\mu > 2 \sigma_0$ and the line $Z[\eta_3]$ must intersect the line $\mu=2 \sigma_0$ at point $\lambda > 2 \sigma_3.$ Substituting into~(\ref{zeroderline}), we obtain two inequalities 
$$
\sigma_0 < \eta_1 (\sigma_2 - \sigma_3 - \sigma_3 \eta_1)
\qquad
\mbox{and}
\qquad
\sigma_0 < \eta_3 (\sigma_2 - \sigma_3 - \sigma_3 \eta_3) \,.
$$
Substituting the expressions for $\sigma_0$ and $\sigma_2$ given by~(\ref{sig210expr}), these inequalities can be rewritten as
$$
0 < (\eta_3 - 1) (1 - \eta_2)
\qquad
\mbox{and} 
\qquad
0 < (1 - \eta_1) (\eta_2 - 1)\,,
$$
which cannot be satisfied at the same time, because $\eta_3>1$ and $\eta_1<1$ holds in all cases (i)--(iii) when the chemical triangle~${\mathcal T} (\sigma_0,\sigma_1,\sigma_2,\sigma_3)$ is non-empty. Thus we proved by contradiction that the interior of the chemical triangle~${\mathcal T} (\sigma_0,\sigma_1,\sigma_2,\sigma_3)$ cannot by intersected by both lines $Z[\eta_1]$ and $Z[\eta_3]$.
\end{proof}

\subsection{Geometric equivalence classes with three invariant lines}

\label{sec33}

Newton~\cite{Newton:1978:TDH} follows classification of Shilov~\cite{Shilov:1950:ICH} of invariant lines as parabolic of first kind (denoted $P_1$), parabolic of second kind ($P_2$) and hyperbolic ($H$). For example, ODEs~(\ref{quadraticchemical1ex5})--(\ref{quadraticchemical2ex5}) in Example~\ref{example5} are in Newton's class $H(\eta_1)$, $P_1(\eta_2)$, $H(\eta_3)$ which can be deduced directly from the position of point $(\lambda,\mu)=(12,12)$ in Figure~\ref{figure2}(b). We note that Newton would also write this class as $(P_1,H,H)$ because any permutation of indices leads to the same geometric equivalence class. This class is numbered 12 in Lyagina's classification~\cite{Lyagina:1951:ICE}.

In general, using \cite[Proposition 4]{Newton:1978:TDH}, we can use Lemma~\ref{lemma5} to limit the possible classifications of invariant lines for $(\mu,\lambda)$ in the chemical triangle. We observe that, since the chemical triangle~$(\ref{chemicaltriangle2})$ lies on the side of $U[\eta_2]$ opposite to $Z[\eta_2]$, the invariant line corresponding to $\eta=\eta_2$ can only be classified as $P_1(\eta_2)$. We also note that the fact that the chemical triangle is on the same sides of $U[\eta_1]$ and $U[\eta_3]$ as $Z[\eta_1]$ and $Z[\eta_3]$ respectfully means that the invariant lines corresponding to $\eta=\eta_1$ and $\eta=\eta_3$ cannot be categorised as $P_1(\eta_i)$. Moreover, since the interior of the chemical triangle~${\mathcal T} (\sigma_0,\sigma_1,\sigma_2,\sigma_3)$ cannot be intersected by both lines $Z[\eta_1]$ and $Z[\eta_3]$ at the same time, we cannot have both of the invariant lines corresponding to $\eta=\eta_1$ and $\eta=\eta_3$ classified as $P_2(\eta_i)$. Therefore, for $\sigma_3\,\ne\, 0$ and three distinct real roots, the only possible combinations of classification of invariant lines are $(P_1,P_2,H),\, (P_1,H,H),\,(P_1,P_2,P_2H),\,(P_1,H,P_2H)$ and $(P_1, P_2H, P_2H)$, or in terms of Lyagina's geometric equivalence classes~\cite{Lyagina:1951:ICE}, we can realize classes 11, 12, 14, 15 and 16 as chemical system. The same conclusion can also be derived for the limiting case $\sigma_3=0$, where the equation~$(\ref{cubicinvlinesigma})$ reduces to the quadratic equation (with $\sigma_2 \ne 0$)
\begin{equation}
\,-\,
\sigma_2 \, \eta^2
\,+\,
\sigma_1 \, \eta
\,-\,
\sigma_0
\,=\,
0\,
\label{reducedcubicinvlinesigma}    
\end{equation}
because we can apply the following lemma.

\vskip 2mm

\begin{lemma}
Let $\sigma_3 = 0$  and consider that the quadratic equation~$(\ref{reducedcubicinvlinesigma})$ has two nonnegative solutions
$$
\eta_1 < \eta_2.
$$
Then the chemical triangle~$(\ref{chemicaltriangle2})$ is non-empty, i.e. ${\mathcal T} (\sigma_0,\sigma_1,\sigma_2,0) \ne \emptyset$, if and only if one of the following two conditions holds \vskip 2mm
\noindent \rule{0pt}{1pt} \hskip 1cm {\rm (i)} $\;\;\; \eta_2  \le 1\,$, \vskip 1mm
\noindent \rule{0pt}{1pt} \hskip 0.95cm {\rm (ii)} $\;\;\, \eta_2>1$ and $\eta_1 < 1/\eta_2$\,.
\rule{0pt}{1pt} \hskip 0.9cm
\vskip 2mm
\noindent
In each case, the line $Z[\eta_2]$ does not intersect the chemical triangle~${\mathcal T} (\sigma_0,\sigma_1,\sigma_2,0)$ defined by~$(\ref{chemicaltriangle2})$. Moreover, $Z[\infty]$ does not intersect the interior of the chemical triangle~${\mathcal T} (\sigma_0,\sigma_1,\sigma_2,0)$.
\label{lemma6}
\end{lemma}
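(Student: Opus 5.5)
Throughout, $\sigma_3=0$ and $\sigma_2\neq 0$ (so $\sigma_2>0$ by (\ref{sigmaineq})), and the plan is to translate everything into conditions on $\eta_1,\eta_2$ via Vieta's formulas for (\ref{reducedcubicinvlinesigma}):
$$
\sigma_1=\sigma_2(\eta_1+\eta_2),\qquad \sigma_0=\sigma_2\,\eta_1\eta_2 .
$$
With $\sigma_3=0$, the inequalities (\ref{chemicaltriangle}) read $\lambda\ge 0$, $\mu\ge 2\sigma_0$ and $\lambda+\mu\le M$, where $M=\min\{\sigma_0+\sigma_2,\sigma_1\}$. So ${\mathcal T}(\sigma_0,\sigma_1,\sigma_2,0)$ is the (possibly degenerate) triangle with vertices
$$
(0,2\sigma_0),\qquad (0,M),\qquad (M-2\sigma_0,\,2\sigma_0).
$$
It is non-empty if and only if $2\sigma_0\le M$. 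Exactly as in (\ref{auxineq}), this splits into $\sigma_0\le\sigma_2$ and $2\sigma_0\le\sigma_1$, that is,
$$
\eta_1\eta_2\le 1 \qquad\mbox{and}\qquad 2\eta_1\eta_2\le \eta_1+\eta_2 .
$$

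\emph{Case $\eta_2\le 1$.} Here $0\le\eta_1<\eta_2\le1$ gives $\eta_1\eta_2\le\eta_1$ and $\eta_1\eta_2\le\eta_2$. Hence both inequalities hold automatically, which gives (i).

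\emph{Case $\eta_2>1$.} The second inequality becomes $\eta_1\le \eta_2/(2\eta_2-1)$, and $\eta_2^2\ge 2\eta_2-1$ shows this is implied by $\eta_1\le 1/\eta_2$. So the binding constraint is $\eta_1\eta_2\le 1$, which gives (ii). I will have to check the boundary case $\eta_1\eta_2=1$ carefully. There $\sigma_0=\sigma_2$, the triangle collapses but is still non-empty under the definitions, so the strict inequality in (ii) may need to be read as ``$\le$''. Alternatively the equality case may be excluded by some implicit nondegeneracy assumption; this is the one point where I expect friction with the statement as written.

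For $Z[\eta_2]$, which by (\ref{zeroderline}) with $\sigma_3=0$ is the line $\eta_2\lambda+\mu=2\eta_2\sigma_2$, I will show the triangle lies strictly below it. Since $\eta_2\lambda+\mu$ is linear, it suffices to check the three vertices.
\begin{itemize}
\item At $(0,2\sigma_0)$: the value is $2\sigma_2\eta_1\eta_2<2\eta_2\sigma_2$, because $\eta_1<1$ in both cases (in case (ii), $\eta_1\le1/\eta_2<1$).
\item At $(0,M)$: the value is $M\le\sigma_1=\sigma_2(\eta_1+\eta_2)<2\eta_2\sigma_2$, because $\eta_1<\eta_2$.
\item At $(M-2\sigma_0,2\sigma_0)$: this is the main computation, and I split by the size of $\eta_2$.
\begin{itemize}
\item If $\eta_2\ge1$, I use $M\le\sigma_0+\sigma_2$. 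The gap to $2\eta_2\sigma_2$ is then at least $\eta_2\sigma_2+(\eta_2-2)\sigma_0$. Using $\sigma_0\le\sigma_2$, this is $\ge 2(\eta_2-1)\sigma_2+\sigma_2>0$.
\item If $\eta_2<1$, I use $M\le\sigma_1$. The gap becomes $\sigma_2\eta_2\bigl(2-3\eta_1-\eta_2+2\eta_1\eta_2\bigr)$. The bracket equals $2(1-\eta_1)(1-\eta_2)+(\eta_2-\eta_1)>0$.
\end{itemize}
\end{itemize}
Thus the whole triangle lies strictly on one side of $Z[\eta_2]$, so the line cannot intersect it.

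Finally, $Z[\infty]$ is the line $\lambda=0$ by (\ref{infinitycase}). This is precisely the edge $\lambda\ge 0$ of the triangle, so it meets only the boundary and never the interior.
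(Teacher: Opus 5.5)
Your non-emptiness argument and your treatment of $Z[\infty]$ are the same as the paper's. For $Z[\eta_2]$ you take a different route. The paper argues through the undefined-derivative lines. By Newton's Proposition~5, $Z[\eta_2]$ passes through $U[\eta_1]\cap U[\infty]=(\sigma_2,\sigma_1-\sigma_2\eta_1)$, and this point lies strictly above $U[\eta_2]$. The triangle lies below $U[\eta_2]$ by Lemma~\ref{lemma4}. Since $Z[\eta_2]$ and $U[\eta_2]$ are parallel (both have normal $(\eta_2,1)$, and $2\eta_2\sigma_2-\sigma_1=\sigma_2(\eta_2-\eta_1)>0$), the triangle is separated from $Z[\eta_2]$. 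You instead evaluate $\eta_2\lambda+\mu$ at the three vertices. Your route is self-contained, needing neither Newton's result nor Lemma~\ref{lemma4}, and it gives strict separation of the whole closed triangle directly, including the one-point case. The paper's route is shorter: once the triangle is known to lie below $U[\eta_2]$, only your second-vertex inequality $\sigma_1<2\eta_2\sigma_2$ is needed.

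One estimate is wrong and must be repaired. At the vertex $(M-2\sigma_0,2\sigma_0)$ with $\eta_2\ge1$, you claim $\eta_2\sigma_2+(\eta_2-2)\sigma_0\ge 2(\eta_2-1)\sigma_2+\sigma_2$. This is false: for $\sigma_2=1$, $\eta_1=\tfrac12$, $\eta_2=1$ the left side equals $\tfrac12$. What $\sigma_0\le\sigma_2$ actually gives is $2(\eta_2-1)\sigma_2$, and only when $\eta_2\le 2$. That bound is $0$ at $\eta_2=1$, so strictness is lost there. Substituting $\sigma_0=\sigma_2\eta_1\eta_2$ fixes it:
\[
\eta_2\sigma_2+(\eta_2-2)\sigma_0=\sigma_2\eta_2\bigl((1-\eta_1)+\eta_1(\eta_2-1)\bigr)>0,
\]
since $\eta_1<1$ and $\eta_2\ge1$.

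Your worry about the boundary is justified, and you can settle it. Suppose $\eta_2>1$ and $\eta_1\eta_2=1$. Then $\sigma_0=\sigma_2$ and $2\sigma_0\le\sigma_1$, so $M=2\sigma_0$ and $\mathcal{T}(\sigma_0,\sigma_1,\sigma_2,0)=\{(0,2\sigma_0)\}\neq\emptyset$. This happens for genuine chemical systems, for example $\mathrm{d}x/\mathrm{d}t=(x-1)^2/2$, $\mathrm{d}y/\mathrm{d}t=(x-y)^2$, which has $\eta_1=\tfrac12$, $\eta_2=2$ and $(\lambda,\mu)=(0,2)$. So condition (ii) should read $\eta_1\le 1/\eta_2$. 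The paper's own proof in fact derives the non-strict inequality, so the strict sign in the statement is a misprint, not a defect in your argument.
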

    
\vskip 2mm

\begin{proof}
Substituting $\sigma_3 = 0$ into equation~(\ref{auxineq}), we obtain
\begin{equation}
\sigma_0
\,\le\,
\sigma_2
\qquad
\mbox{and}
\qquad
2 \sigma_0
\,\le\,
\sigma_1\,.
\label{auxineql6}
\end{equation}
Since $\eta_1$ and $\eta_2$ are roots of the quadratic equation~(\ref{reducedcubicinvlinesigma}), we have
$$
\sigma_1 = \sigma_2 (\eta_1 + \eta_2)
\qquad
\mbox{and}
\qquad
\sigma_0 = \sigma_2 \eta_1 \eta_2\,.
$$
Substituting into~(\ref{auxineql6}), we get
\begin{equation}
\eta_1 \le \frac{1}{\eta_2}
\qquad
\mbox{and}
\qquad
\eta_1(2\eta_2 -1) \le \eta_2\,.
\label{auxineq2l6}
\end{equation}
If $\eta_2\,\le\,1$, then $2\eta_2-1\,\le\,1$, so $\eta_1(2\eta_2-1)\,\le\,\eta_1\,<\,\eta_2$, and thus~(\ref{auxineq2l6}) is satisfied for all $\eta_2\,\le \,1$. If $\eta_2\,>\,1$, then the first inequality in~(\ref{auxineq2l6}) implies the second, because
$$
\eta_1 
\,\le\,
\frac{1}{\eta_2}
\,\le\,
\frac{\eta_2}{2\eta_2-1}\,.
$$
As in the proof of Lemma~\ref{lemma5},  we use~\cite[Proposition 5]{Newton:1978:TDH} to deduce that the lines $U[\eta_1],$ $U[\eta_2]$ and $U[\infty]$ form a triangle of nonzero area and line $Z[\eta_2]$ passes through the intersection of lines $U[\eta_1]$ and $U[\infty]$. However, in this case, $\lambda_{13} = \sigma_2$, and thus, substituting this into the equation for $U[\eta_1]$, we find that $\mu_{13} = \sigma_1 -\sigma_2\eta_1$. Substituting this into the equation for $U[\eta_2]$, we see that $\lambda = \sigma_2 \eta_1/\eta_2< \lambda_{13}$.Thus, like in our proof of Lemma~\ref{lemma3}, we conclude that the chemical triangle~${\mathcal T} (\sigma_0,\sigma_1,\sigma_2, 0)$ defined by~$(\ref{chemicaltriangle2})$ lies on the side of $U[\eta_2]$ opposite the line $Z[\eta_2]$. Finally, note that the horizontal side of the chemical triangle~${\mathcal T} (\sigma_0,\sigma_1,\sigma_2,0)$ lies on the line $\lambda = 0$, which, using~(\ref{infinitycase}), is also $Z[\infty]$. Therefore, $Z[\infty]$ does not intersect the interior of the chemical triangle~${\mathcal T} (\sigma_0,\sigma_1,\sigma_2,0)$. 
\end{proof}

\vskip 2mm

\noindent
Using \cite[Proposition 4]{Newton:1978:TDH} and Lemma~\ref{lemma6}, we can make the same conclusion for the case $\sigma_3=0$ as we have argued for $\sigma_3 \ne 0$ at the beginning of this subsection, i.e. the only possible combinations of classification of invariant lines are $(P_1,P_2,H),\, (P_1,H,H),\,(P_1,P_2,P_2H),\,(P_1,H,P_2H)$ and $(P_1, P_2H, P_2H)$, or in terms of Lyagina's geometric equivalence classes, we can realize classes 11, 12, 14, 15 and 16 as chemical systems. Examples are provided in Table~\ref{table1}. 

\subsection{Geometric equivalence classes with one invariant line}

\label{sec34}

In Sections~\ref{sec32} and~\ref{sec33}, we investigated the case in which there are three invariant lines~(\ref{invlines}). This occurs either when the cubic equation (\ref{cubicinvlinesigma}) has the maximum number of three real roots, as described in Lemma~\ref{lemma5}, or when the quadratic equation~(\ref{reducedcubicinvlinesigma}) has the maximum number of two real roots and the third invariant line $x=1$ corresponds to $\eta=\infty$, as described in Lemma~\ref{lemma6}. Another common case arises when the cubic equation~(\ref{cubicinvlinesigma}) has one real root and a pair of complex conjugate roots, so that the ODE system~(\ref{quadraticchemical1})--(\ref{quadraticchemical2}) has only one invariant line~(\ref{invlines}). We discuss this case in Lemma~\ref{lemma9}, followed by the remaining cases in which there is only one invariant line.

\vskip 2mm

\begin{lemma}
Let $\sigma_3 \ne 0$ and consider that the cubic equation~$(\ref{cubicinvlinesigma})$ has one real solution and a pair of two conjugate imaginary roots
$$
\eta_1 \qquad
\mbox{and}
\qquad
c_1 \pm i \, c_2
\qquad
\mbox{for} \quad c_2 \ne 0.
$$
Then the chemical triangle~$(\ref{chemicaltriangle2})$ is non-empty, i.e. ${\mathcal T} (\sigma_0,\sigma_1,\sigma_2,\sigma_3) \ne \emptyset$, if and only if one of the following two conditions holds \vskip 2mm
\noindent \rule{0pt}{1pt} \hskip 1cm {\rm (i)} $\;\;\, c_1>1$ and
\begin{equation}
\eta_1 \le \frac{2 (c_1-1)}{c_1^2+c_2^2 -1}\,,
\label{restrictioneta2g1l7a}
\end{equation}
\rule{0pt}{1pt} \hskip 0.95cm {\rm (ii)} $\;\;c_1^2+c_2^2 < c_1$ and 
\begin{equation}
\eta_1 \ge \frac{1 - c_1^2-c_2^2}{2(c_1-c_1^2-c_2^2)}\,.
\label{restrictioneta2l1l7a}
\end{equation}
\vskip 1mm
\noindent
In each case, the interior of the chemical triangle~${\mathcal T} (\sigma_0,\sigma_1,\sigma_2,\sigma_3)$ is bounded above and below by $U[\eta_1]$ and $Z[\eta_1]$, respectively.
\label{lemma9}
\end{lemma}

\vskip 2mm

\begin{proof}
Using~(\ref{chemicaltriangle}), we can follow the beginning of proof of Lemma~\ref{lemma5} to obtain that the chemical triangle~(\ref{chemicaltriangle2}) is non-empty if and only if
inequalities~(\ref{auxineq2}) are satisfied, where we substitute $\eta_2=c_1+i\,c_2$ and $\eta_3=c_1-i\,c_2$. We get
\begin{equation}
2  + \eta_1 (c_1^2+c_2^2)
\,\le\,
\eta_1 + 2 c_1
\qquad
\mbox{and}
\qquad
1 + 2 \eta_1 (c_1^2+c_2^2)
\,\le\,
2 \eta_1 c_1  + c_1^2 + c_2^2\,.
\label{auxineq2l8}
\end{equation}
If $c_1>1$, then these inequalities can be rewritten as
\begin{equation*}
\eta_1 \,\le\, \frac{2(c_1-1)}{c_1^2+c_2^2-1}
\qquad
\mbox{and}
\qquad
\eta_1 \,\le\, \frac{c_1^2 + c_2^2-1}
{2(c_1^2+c_2^2-c_1)}\,,
\end{equation*}
which reduces to~(\ref{restrictioneta2g1l7a}), because
$$
\frac{2(c_1-1)}{c_1^2+c_2^2-1}
\,\le\, \frac{c_1^2 + c_2^2-1}{2(c_1^2+c_2^2-c_1)}
\qquad
\mbox{for} \quad c_1>1 \quad
\mbox{and} \quad c_2 \in {\mathbb{R}}.
$$
Similarly, the second inequality in~(\ref{auxineq2l8}) implies~(\ref{restrictioneta2l1l7a}) if $c_1^2+c_2^2 < c_1$. Using Lemma~\ref{lemma4}, we know that the interior of the chemical triangle~${\mathcal T} (\sigma_0,\sigma_1,\sigma_2,\sigma_3)$ is bounded above by $U[\eta_1]$. Finally, the interior of the chemical triangle~${\mathcal T} (\sigma_0,\sigma_1,\sigma_2,\sigma_3)$ satisfies $\lambda > 2\sigma_3$ and $\mu > 2\sigma_0$. Substituting into equation~(\ref{zeroderline}) for $\eta=\eta_1$, we obtain the inequality
$
2\sigma_3\eta_1+2\sigma_0 \,<\, 2\eta_1(\sigma_2-\sigma_3\eta_1),
$
which, using expressions for $\sigma_0$ and $\sigma_2$ given by~(\ref{sig210expr}), can be equivalently rewritten as
$
2 \sigma_3 \eta_1 \big((c_1-1)^2+c_2^2\big) < 0,
$
which does not hold for any $c_1$, $c_2$, $\sigma_3 > 0$ and $\eta_1 \ge 0$. Consequently, the interior of the chemical triangle~${\mathcal T} (\sigma_0,\sigma_1,\sigma_2,\sigma_3)$ is bounded below by the line $Z[\eta_1]$. 
\end{proof}

\noindent
Lemma~\ref{lemma9} implies that the chemical triangle~$(\ref{chemicaltriangle2})$ lies between the lines $U[\eta_1]$ and $Z[\eta_1]$. In particular, using \cite[Proposition 4]{Newton:1978:TDH}, the invariant line corresponding to $\eta=\eta_1$ can only be classified as either $H(\eta_1)$ or $P_2H(\eta_1)$. Therefore, for a cubic equation with one real solution and a pair of two conjugate imaginary roots, the only possible combinations of classification of invariant lines are $(H)$ and $(P_2H)$, or in terms of Lyagina's geometric equivalence classes, we can only obtain classes 1 and 4~\cite{Lyagina:1951:ICE}. We follow with an example chemical system in Lyagina's class 1.

\vskip 2mm

\begin{example}
\label{example6}
Consider the following system of $m=11$ chemical reactions:
\begin{equation}
\emptyset
\;
\mathop{\longrightarrow}^{20}
\;
X \, ,
\;\quad
\emptyset
\;
\mathop{\longrightarrow}^{5}
\;
Y \, ,
\;\quad
X
\;
\mathop{\longrightarrow}^{42}
\;
\emptyset \, ,
\;\quad
Y
\;
\mathop{\longrightarrow}^{8}
\;
\emptyset \, ,
\;\quad
X
\;
\mathop{\longrightarrow}^{6}
\;
Y\, ,
\;\quad
Y
\;
\mathop{\longrightarrow}^{8}
\;
X\, ,
\;\quad
X \, + \, Y 
\;
\mathop{\longrightarrow}^{40}
\;
\emptyset \,, \;
\label{crnex6a}
\end{equation}
\begin{equation}
2 X
\;
\mathop{\longrightarrow}^{27}
\;
3X \, ,
\qquad
2 Y
\;
\mathop{\longrightarrow}^{12}
\;
3 Y \, ,
\qquad
2 X
\;
\mathop{\longrightarrow}^{17}
\;
3 X + Y
\quad
\mbox{and}
\quad
2 Y
\;
\mathop{\longrightarrow}^{16}
\;
X + 3 Y \, .
\label{crnex6b}    
\end{equation}
Then the ODEs~(\ref{quadraticchemical1})--(\ref{quadraticchemical2}) become
\begin{eqnarray}
\frac{\mbox{{\rm d}}x}{\mbox{{\rm d}}t}
\,& = &\,
20
\,-\,
48 \, x
\,+\,
8 \, y
\,+\,
44 \, x^2
\,-\,
40 \, x \, y
\,+\,
16 \, y^2 \,,
\label{quadraticchemical1ex6}
\\
\frac{\mbox{{\rm d}}y}{\mbox{{\rm d}}t}
\,& = &\,
5
\,+\,
6 \, x
\,-\,
16 \, y
\,+\,
17 \, x^2
\,-\,
40 \, x \, y
\,+\,
28 \, y^2 \,,
\label{quadraticchemical2ex6}
\end{eqnarray}
which satisfy relations~(\ref{othcoef1})--(\ref{othcoef6}). Using~(\ref{sigmavalues}), the coefficients $\sigma_i$, $i=0,1,2,3,$ are
$\sigma_0 = 17\,,$
$\sigma_1 = 84\,,$
$\sigma_2 = 68\,$
and $\sigma_3 = 16.$
The roots of the cubic polynomial~(\ref{cubicinvlinesigma}) are
$\eta_1 = 1/4$
and
$\eta_{2,3} = 2 \pm i/2$.
The phase portrait is plotted in Figure~\ref{figure3}(a), where we plot the one invariant line~(\ref{invlines}) corresponding to $\eta_1=1/4$ as the black dashed line. The ODE system~(\ref{quadraticchemical1ex6})--(\ref{quadraticchemical2ex6}) can be also rewritten in the form~(\ref{homODE1lambdamu})--(\ref{homODE2lambdamu}), where
$
\lambda = \mu = 40.
$
These values satisfy inequalities~(\ref{chemicaltriangle}), which describe the chemical triangle~(\ref{chemicaltriangle2}) in the $(\lambda,\mu)$-parameter plane. The chemical triangle is visualized as the red triangle in Figure~\ref{figure3}(b) together with the lines $Z[\eta_1]$ and $U[\eta_1]$, defined by~(\ref{zeroderline}) and~(\ref{undefline}), with their relative positioning (below and above the chemical triangle) illustrating the results of Lemma~\ref{lemma9}.
\end{example}

\vskip 2mm

\begin{figure}
\rule{0pt}{1pt}
\vskip 1mm
\rule{0pt}{1pt}
\hskip -1.5cm
\epsfig{file=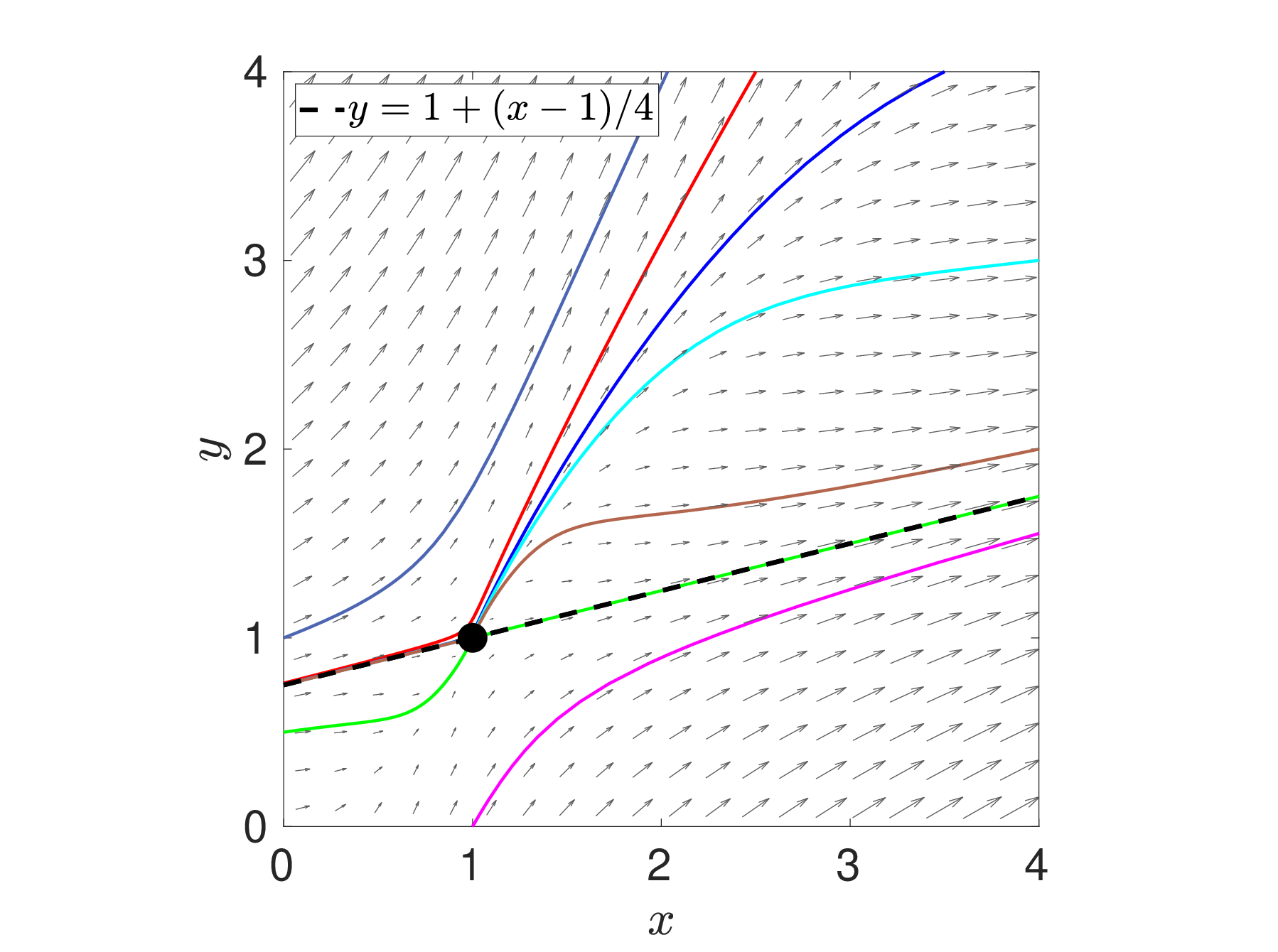,height=7.4cm}
\hskip -1.5cm
\epsfig{file=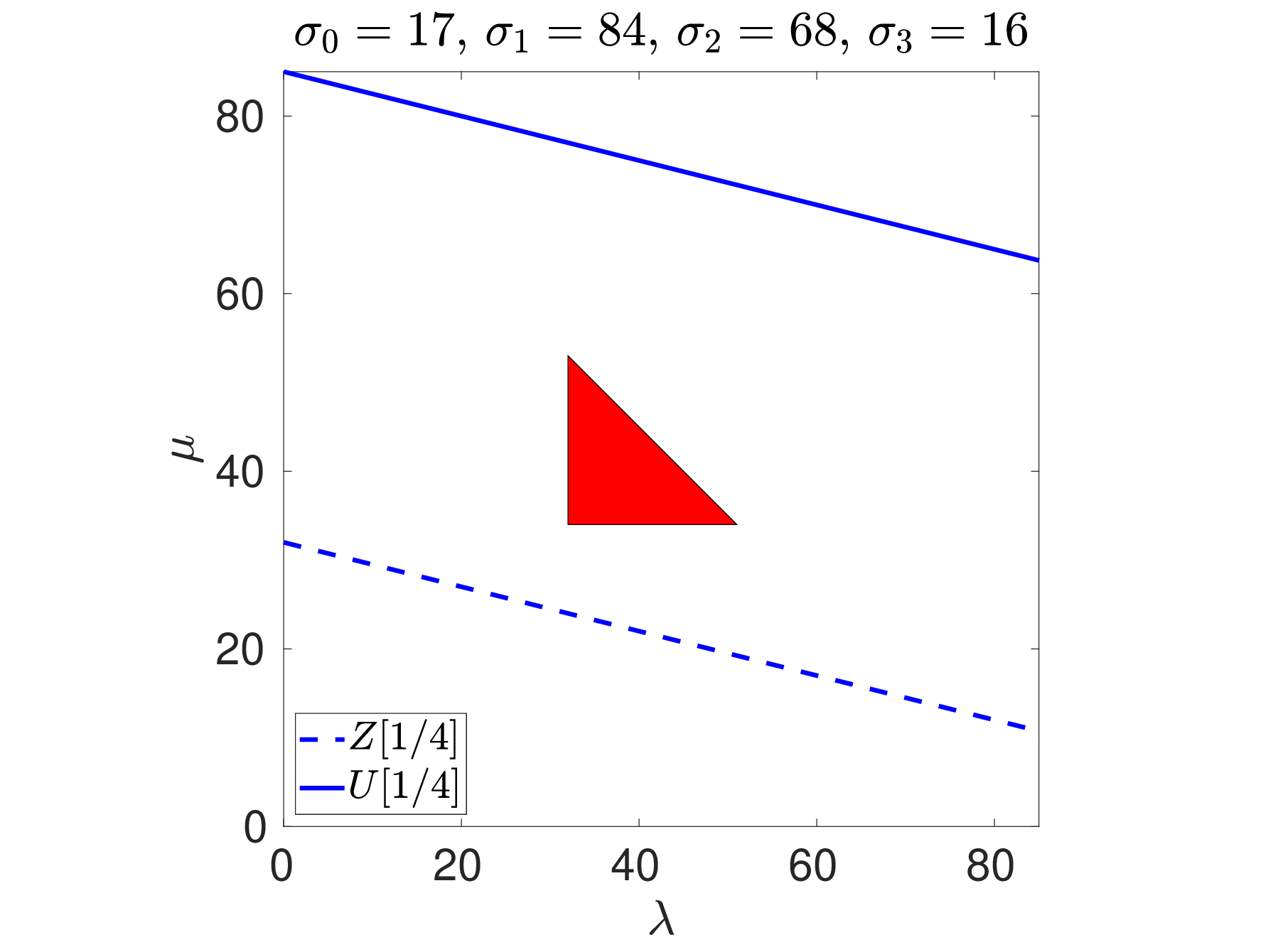,height=7.4cm}
\vskip -7.6cm
(a) \hskip 76mm (b)
\vskip 7.1cm
\caption{{\it {\rm (a)} The phase plane of the ODE system~$(\ref{quadraticchemical1ex6})$--$(\ref{quadraticchemical2ex6})$ describing the chemical reaction network~$(\ref{crnex6a})$--$(\ref{crnex6b})$. We plot seven illustrative trajectories in the non-negative quadrant $\overline{\Omega}$ together with the invariant line $y=1 +  (x-1)/4$.
\hfill\break {\rm (b)} The chemical triangle~$(\ref{chemicaltriangle})$ in the $(\lambda,\mu)$-parameter space is visualized as the red triangle together with the zero-derivative lines $Z[\eta_1]$ given by~$(\ref{zeroderline})$ and undefined-derivative lines $U[\eta_1]$ given by~$(\ref{undefline})$. 
}}
\label{figure3}
\end{figure}

\noindent
The result of Lemma~\ref{lemma9} also holds in the case $\sigma_3=0$, when the quadratic equation~(\ref{reducedcubicinvlinesigma}) has no real roots. In this case, the only invariant line is $x=1$, corresponding to $\eta=\infty$, and the interior of the chemical triangle is bounded below by the line~$Z[\infty]$ and above by the line~$U[\infty]$, defined by~(\ref{infinitycase}). This implies that the ODE systems in Lyagina's geometric equivalence classes 1 and 4 can be realized by chemical reaction systems for any $\sigma_3 \ge 0$. Examples with $\sigma_2=0$ are given in Table \ref{table1}. Another case in which there is only one invariant line occurs when the cubic equation has a triple root. This case is covered by the following lemma.

\vskip 2mm

\begin{lemma}
Let $\sigma_3 \ne 0$ and consider that the cubic equation~$(\ref{cubicinvlinesigma})$ has a triple real root $\eta$. Then the chemical triangle~$(\ref{chemicaltriangle2})$ is non-empty, i.e. ${\mathcal T} (\sigma_0,\sigma_1,\sigma_2,\sigma_3) \ne \emptyset$, if and only if $\eta = 1$. In this case, the chemical triangle reduces to a single point, i.e. 
${\mathcal T} (\sigma_0,\sigma_1,\sigma_2,\sigma_3) = \big\{ (2 \sigma_3, 2 \sigma_3) \big\}.$ 
\label{lemma8}
\end{lemma}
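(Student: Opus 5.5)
Following the beginning of the proof of Lemma~\ref{lemma5}, the chemical triangle~(\ref{chemicaltriangle2}) is non-empty if and only if the two inequalities~(\ref{auxineq}) hold. Equivalently, by Vieta's formulas~(\ref{sig210expr}), the inequalities~(\ref{auxineq2}) must hold, now with $\eta_1=\eta_2=\eta_3=\eta$. In the triple-root case we have $\sigma_2 = 3\sigma_3\eta$, $\sigma_1 = 3\sigma_3\eta^2$ and $\sigma_0=\sigma_3\eta^3$. Since $\sigma_3>0$ (by~(\ref{sigmaineq}) and $\sigma_3\ne0$), the inequalities~(\ref{auxineq2}) reduce to
$$
2+\eta^3 \,\le\, 3\eta
\qquad\mbox{and}\qquad
1+2\eta^3 \,\le\, 3\eta^2 .
$$
Both sides factor. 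The first becomes $\eta^3-3\eta+2=(\eta-1)^2(\eta+2)\le 0$, and the second becomes $2\eta^3-3\eta^2+1=(\eta-1)^2(2\eta+1)\le 0$. By Lemma~\ref{lemma2} we have $\eta\ge 0$, so the factors $\eta+2$ and $2\eta+1$ are strictly positive. Each inequality therefore holds if and only if $(\eta-1)^2\le 0$, that is, $\eta=1$. This proves the equivalence: the triangle is non-empty exactly when $\eta=1$.

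For the second claim, we substitute $\eta=1$ to get $\sigma_0=\sigma_3$ and $\sigma_1=\sigma_2=3\sigma_3$. The constraints~(\ref{chemicaltriangle}) then read
$$
\lambda\ge 2\sigma_3,\qquad \mu\ge 2\sigma_3,\qquad \lambda+\mu\le \min\{\sigma_0+\sigma_2,\sigma_1+\sigma_3\}=4\sigma_3 .
$$
Adding the first two gives $\lambda+\mu\ge4\sigma_3$, so all three inequalities must be equalities. The only point satisfying them is $(\lambda,\mu)=(2\sigma_3,2\sigma_3)$, and this point does satisfy them, so the triangle is this single point.

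The main step where care is needed is the factorisation together with the sign argument. We must make sure that the nonnegativity $\eta\ge0$ from Lemma~\ref{lemma2} really applies to a repeated root, so that the linear factors cannot change sign. This holds because Descartes' rule of signs gives nonnegativity of all real roots. Everything else is direct substitution.
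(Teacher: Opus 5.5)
Your proof is correct and follows the paper's argument essentially verbatim: you reduce non-emptiness to the inequalities from the proof of Lemma~\ref{lemma5} with $\eta_1=\eta_2=\eta_3=\eta$, factor them as $(\eta-1)^2(\eta+2)\le 0$ and $(\eta-1)^2(2\eta+1)\le 0$, and then read off $\lambda=\mu=2\sigma_3$ from~(\ref{chemicaltriangle}). Your explicit use of $\sigma_3>0$ and $\eta\ge 0$ is a welcome addition, since the paper leaves both implicit, and without $\eta\ge0$ the values $\eta\le -2$ would also satisfy both inequalities.
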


\begin{proof}
Using~(\ref{chemicaltriangle}), we can follow the beginning of proof of Lemma~\ref{lemma5} to obtain that the chemical triangle~(\ref{chemicaltriangle2}) is non-empty if and only if
inequalities~(\ref{auxineq2}) are satisfied, where we substitute $\eta = \eta_1 = \eta_2 = \eta_3$. We get
$$
2  + \eta^3
\,\le\,
3\eta
\qquad
\mbox{and}
\qquad
1 + 2 \eta^3
\,\le\,
3\eta^2\,
$$
which can be rewritten as
$$
(\eta-1)^2(\eta+2)\le\, 0
\qquad
\mbox{and}
\qquad
(\eta-1)^2(2\eta+1)
\,\le\,
0,
$$
which can only be true when $\eta = 1$. Thus, the chemical triangle~(\ref{chemicaltriangle2}) is non-empty if and only if $\eta = 1.$ In this case, the inequalities~(\ref{chemicaltriangle}) imply that 
$\lambda = \mu = 2 \sigma_3$, i.e. the chemical triangle ${\mathcal T} (\sigma_0,\sigma_1,\sigma_2,\sigma_3)$ reduces to the single point $(2 \sigma_3, 2 \sigma_3)$ in the $(\lambda,\mu)$-plane.
\end{proof}

\vskip 2mm

\noindent
In the case where the cubic equation~(\ref{cubicinvlinesigma}) has a triple root, Lemma~\ref{lemma8} reduces our investigation to a single chemical system with $\lambda=\mu=2\sigma_3$, $\sigma_0=\sigma_3$, and $\sigma_1=\sigma_2=3\sigma_3$. Substituting these relations into the system~(\ref{homODE1lambdamu})--(\ref{homODE2lambdamu}), we obtain the ODE system
\begin{eqnarray}
\frac{\mbox{{\rm d}}u}{\mbox{{\rm d}}t}
\,& = &\,
\sigma_3 \, (u-v)^2\,,
\label{homODE1lambdamudeg}
\\
\frac{\mbox{{\rm d}}v}{\mbox{{\rm d}}t}
\,& = &\,
\sigma_3 \, (u-v)^2 \,,
\label{homODE2lambdamudeg}
\end{eqnarray}
which describes a degenerate system with the conservation law $u-v=\mbox{constant}$ and a line of equilibria given by $u=v$. Such degenerate cases are excluded from the classification, see~\cite[Proposition 1]{Newton:1978:TDH}.

\subsection{Geometric equivalence classes with two invariant lines}

\label{sec35}

The final case to consider is when the ODE system~(\ref{homODE1lambdamu})--(\ref{homODE2lambdamu}) has exactly two invariant lines~(\ref{invlines}). If $\sigma_3\ne 0$, this occurs when the cubic equation~(\ref{cubicinvlinesigma}) has one simple real root and one double real root. This situation is investigated in the following lemma, after which we discuss the case $\sigma_3=0$.

\vskip 2mm

\begin{lemma}
Let $\sigma_3 \ne 0$ and consider that the cubic equation~$(\ref{cubicinvlinesigma})$ has a double nonnegative root $\eta_{\mathrm d}$ and a single nonnegative root $\eta_{\mathrm s}$. Then the chemical triangle~$(\ref{chemicaltriangle2})$ is non-empty, i.e. ${\mathcal T} (\sigma_0,\sigma_1,\sigma_2,\sigma_3) \ne \emptyset$, if and only if one of the following three conditions holds \vskip 2mm
\noindent \rule{0pt}{1pt} \hskip 1cm {\rm (i)} $\;\;\; \eta_{\mathrm d} = 1\,$, \vskip 1mm
\noindent \rule{0pt}{1pt} \hskip 0.95cm {\rm (ii)} $\;\;\, \eta_{\mathrm d}>1$ and
\begin{equation}
\eta_{\mathrm s}\le\frac{2}{\eta_{\mathrm d} + 1}\,,
\label{restrictioneta2g1l7}
\end{equation}
\rule{0pt}{1pt} \hskip 0.9cm {\rm (iii)} $\;\;\eta_{\mathrm d} < 1$ and 
\begin{equation}
\eta_{\mathrm s}\ge\frac{\eta_{\mathrm d}+1}{2\eta_{\mathrm d}}\,.
\label{restrictioneta2l1l7}
\end{equation}
\vskip 1mm
\noindent
In each case, the interior of the chemical triangle~${\mathcal T} (\sigma_0,\sigma_1,\sigma_2,\sigma_3)$ is bounded above and below by the lines $U[\eta_{\mathrm s}]$ and $Z[\eta_{\mathrm s}]$, respectively.
\label{lemma7}
\end{lemma}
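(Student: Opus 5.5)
We follow the template of Lemmas~\ref{lemma5} and~\ref{lemma9}. First, we repeat the beginning of the proof of Lemma~\ref{lemma5}: the chemical triangle is non-empty if and only if inequalities~(\ref{auxineq}) hold. Using Vieta's formulas~(\ref{sig210expr}) with $\eta_1=\eta_2=\eta_{\mathrm d}$ and $\eta_3=\eta_{\mathrm s}$, these become~(\ref{auxineq2}). Explicitly, we get
$$
2+\eta_{\mathrm d}^2\eta_{\mathrm s}\le 2\eta_{\mathrm d}+\eta_{\mathrm s}
\qquad\mbox{and}\qquad
1+2\eta_{\mathrm d}^2\eta_{\mathrm s}\le \eta_{\mathrm d}^2+2\eta_{\mathrm d}\eta_{\mathrm s}.
$$
We rearrange these as
$$
\eta_{\mathrm s}(\eta_{\mathrm d}^2-1)\le 2(\eta_{\mathrm d}-1)
\qquad\mbox{and}\qquad
2\eta_{\mathrm d}\eta_{\mathrm s}(\eta_{\mathrm d}-1)\le \eta_{\mathrm d}^2-1 .
$$
We then split into three cases. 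If $\eta_{\mathrm d}=1$, both sides vanish, so both inequalities hold. If $\eta_{\mathrm d}>1$, we divide by $\eta_{\mathrm d}-1>0$ and obtain $\eta_{\mathrm s}\le 2/(\eta_{\mathrm d}+1)$ and $\eta_{\mathrm s}\le(\eta_{\mathrm d}+1)/(2\eta_{\mathrm d})$. The first bound implies the second, since $4\eta_{\mathrm d}\le(\eta_{\mathrm d}+1)^2$. This gives~(\ref{restrictioneta2g1l7}). If $\eta_{\mathrm d}<1$, dividing flips both inequalities and gives $\eta_{\mathrm s}\ge 2/(\eta_{\mathrm d}+1)$ and $\eta_{\mathrm s}\ge(\eta_{\mathrm d}+1)/(2\eta_{\mathrm d})$. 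By the same AM--GM comparison, now the second bound implies the first, which gives~(\ref{restrictioneta2l1l7}). One should check the case $\eta_{\mathrm d}=0$ separately; it forces $\sigma_0=0$ and is covered by the (degenerate) form of the inequality.

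For the geometric statement, the upper bound by $U[\eta_{\mathrm s}]$ follows directly from Lemma~\ref{lemma4}, which holds for any root. For the lower bound by $Z[\eta_{\mathrm s}]$, we argue as in the proof of Lemma~\ref{lemma9}. Interior points satisfy $\lambda>2\sigma_3$ and $\mu>2\sigma_0$. By~(\ref{zeroderline}), it therefore suffices to show that
$$
2\sigma_3\eta_{\mathrm s}+2\sigma_0\ge 2\eta_{\mathrm s}(\sigma_2-\sigma_3\eta_{\mathrm s}),
$$
since then every interior point has $\eta_{\mathrm s}\lambda+\mu$ strictly above the value on $Z[\eta_{\mathrm s}]$. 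We substitute $\sigma_0=\sigma_3\eta_{\mathrm d}^2\eta_{\mathrm s}$ and $\sigma_2=\sigma_3(2\eta_{\mathrm d}+\eta_{\mathrm s})$ and divide by $2\sigma_3\eta_{\mathrm s}$ (if $\eta_{\mathrm s}=0$ the inequality is trivial). The condition becomes $1+\eta_{\mathrm d}^2\ge 2\eta_{\mathrm d}$, that is, $(\eta_{\mathrm d}-1)^2\ge0$, which always holds. This is the analogue of the factor $(c_1-1)^2+c_2^2$ in Lemma~\ref{lemma9}, with $c_2=0$.

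The main obstacle is the case analysis on the sign of $\eta_{\mathrm d}-1$. We need to verify which of the two resulting bounds is binding, and handle the edge cases $\eta_{\mathrm d}=0$ and $\eta_{\mathrm s}=0$ correctly. We also need to note that in case (i) with $(\eta_{\mathrm d}-1)^2=0$, the line $Z[\eta_{\mathrm s}]$ may touch the boundary vertex $(2\sigma_3,2\sigma_0)$ but not the interior, which is consistent with the claim being about the interior only.
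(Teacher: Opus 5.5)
Your proof is correct and follows the paper's argument essentially step for step: substitute Vieta's formulas into the non-emptiness inequalities, split on the sign of $\eta_{\mathrm d}-1$, invoke Lemma~\ref{lemma4} for the upper bound, and reduce the lower bound by $Z[\eta_{\mathrm s}]$ to $(\eta_{\mathrm d}-1)^2\ge 0$. Your explicit AM--GM check of which bound is binding, and your flagging of $\eta_{\mathrm d}=0$, are small improvements over the paper's presentation. For $\eta_{\mathrm d}=0$, the operative point is not $\sigma_0=0$ but that the second inequality becomes $0\le -1$, so the triangle is empty and (iii) must be read as $2\eta_{\mathrm d}\eta_{\mathrm s}\ge \eta_{\mathrm d}+1$.
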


\begin{proof}
Using~(\ref{chemicaltriangle}), we can follow the beginning of proof of Lemma~\ref{lemma5} to obtain that the chemical triangle~(\ref{chemicaltriangle2}) is non-empty if and only if
inequalities~(\ref{auxineq2}) hold, where we substitute~$\eta_1=\eta_2=\eta_{\mathrm d}$ and $\eta_3=\eta_{\mathrm s}$. We get
$2  + \eta_{\mathrm d}^2 \eta_{\mathrm s}
\,\le\,
2\eta_{\mathrm d} + \eta_{\mathrm s}$
and
$1 + 2 \eta_{\mathrm d}^2 \eta_{\mathrm s}
\,\le\,
\eta_{\mathrm d}^2
+ 2\eta_{\mathrm d}\eta_{\mathrm s}$,
which can be equivalently rewritten as
\begin{equation}
0\,\le\,(\eta_{\mathrm d}-1)\big(2-(\eta_{\mathrm d} + 1)\eta_{\mathrm s}\big)
\qquad
\mbox{and}
\qquad
0\,\le\,(\eta_{\mathrm d}-1)\big(1+\eta_{\mathrm d}-2\eta_{\mathrm d}\eta_{\mathrm s})\big).
\label{auxineq2l7}
\end{equation}
If $\eta_{\mathrm d}=1$, then both inequalities (\ref{auxineq2l7}) simplify to trivial equalities valid for all $\eta_{\mathrm s}.$ If $\eta_{\mathrm d} > 1$, then inequalities (\ref{auxineq2l7}) can be rewritten as the restriction~(\ref{restrictioneta2g1l7}) on $\eta_{\mathrm s}$ from above. If $\eta_{\mathrm d} < 1$, then inequalities (\ref{auxineq2l7}) can be rewritten as the restriction~(\ref{restrictioneta2l1l7}) on $\eta_{\mathrm s}$ from below. Using Lemma~\ref{lemma4}, we know that the interior of the chemical triangle~${\mathcal T} (\sigma_0,\sigma_1,\sigma_2,\sigma_3)$ is bounded above by $U[\eta_{\mathrm s}]$. Finally, the interior of the chemical triangle~${\mathcal T} (\sigma_0,\sigma_1,\sigma_2,\sigma_3)$ satisfies $\lambda > 2\sigma_3$ and $\mu > 2\sigma_0$. Substituting into equation~(\ref{zeroderline}) for $\eta=\eta_{\mathrm s}$, we obtain the inequality
$
2\sigma_3\eta_{\mathrm s}+2\sigma_0 \,<\, 2\eta_{\mathrm s}(\sigma_2-\sigma_3\eta_{\mathrm s}),
$
which, using expressions for $\sigma_0$ and $\sigma_2$ given by~(\ref{sig210expr}), can be equivalently rewritten as
$
2 \sigma_3 \eta_{\mathrm s} (\eta_{\mathrm d} -1)^2 < 0,
$
which does not hold for any $\sigma_3 > 0$, $\eta_{\mathrm s}\,\ge\,0$ and $\eta_{\mathrm d}\,\ge\,0$. Consequently, the interior of the chemical triangle~${\mathcal T} (\sigma_0,\sigma_1,\sigma_2,\sigma_3)$ is bounded below by the line $Z[\eta_{\mathrm s}]$. 
\end{proof}

\vskip 2mm

\noindent
Lemma~\ref{lemma7} implies that the chemical triangle~(\ref{chemicaltriangle2}) lies between $U[\eta_{\mathrm s}]$ and $Z[\eta_{\mathrm s}]$. By~\cite[Proposition 4]{Newton:1978:TDH}, the invariant line~(\ref{invlines}) corresponding to $\eta=\eta_{\mathrm s}$ can be classified as either $H(\eta_{\mathrm s})$ or $P_2H(\eta_{\mathrm s})$. Therefore, when the cubic equation has one double real root and one simple real root, the only possible combinations of invariant-line classifications are $(H,P_i-P_j)$ and $(P_2H,P_i-P_j)$. Consequently, in terms of Lyagina's geometric equivalence classes, classes 5 and 8 can be realized by chemical systems.

Lemma~\ref{lemma7} is formulated and proved for the case $\sigma_3 \ne 0$. However, the same conclusions also hold when $\sigma_3=0$. In particular, Lyagina's geometric equivalence classes 5 and 8 can also be realized by chemical systems with $\sigma_3=a_{0,2}=0$, in which case one of the two invariant lines is $x=1$, corresponding to $\eta=\infty$. We discuss this case further in the next subsection.

\subsection{Case $\sigma_3=a_{0,2}=0$ where one invariant line is $x=1$}

\label{sec36}

In Sections~\ref{sec33}, \ref{sec34}, and \ref{sec35}, we investigated three different cases in which the ODE system~(\ref{homODE1lambdamu})--(\ref{homODE2lambdamu}) has three, one, or two invariant lines, respectively. When $\sigma_3 \ne 0$, these cases correspond to the cubic equation~(\ref{cubicinvlinesigma}) having three, one, or two distinct real roots, respectively. The cases of three and one distinct real roots represent the more generic situations and are studied in Lemmas~\ref{lemma5} and \ref{lemma9}, respectively, whereas the case of two distinct real roots requires one of the roots to be a double root.

Another special case arises when $\sigma_3=0$. If $\sigma_2\ne 0$, the cubic equation~(\ref{cubicinvlinesigma}) reduces to the quadratic equation~(\ref{reducedcubicinvlinesigma}), whereas if $\sigma_2=0$ and $\sigma_1 \ne 0$, it reduces to a linear equation. When $\sigma_3=0$, one of the possible invariant lines is $x=1$, corresponding to $\eta=\infty$ in~(\ref{invlines}). In Section~\ref{sec33}, we investigated the case $\sigma_3=0$ separately in Lemma~\ref{lemma6}, whose results are consistent with the limit $\eta_3\to\infty$ of the results of Lemma~\ref{lemma5}. The same observation applies to the cases of one and two invariant lines studied in Sections~\ref{sec34} and \ref{sec35}, respectively, although we did not formulate separate lemmas for the special case $\sigma_3=0$. For example, in Section~\ref{sec35}, one could formulate results analogous to those of Lemma~\ref{lemma7}, with either the simple root $\eta_{\mathrm s}$ or the double root $\eta_{\mathrm d}$ located at $\eta=\infty$. In the former case, the quadratic equation~(\ref{reducedcubicinvlinesigma}) has a real double root, whereas in the latter case, the double root is at infinity and the cubic equation~(\ref{cubicinvline}) reduces to a linear equation, with $\sigma_3=\sigma_2=0$ and $\sigma_1 \ne 0$. In summary, the same conclusions obtained for $\sigma_3\ne 0$ also hold for $\sigma_3=0$. In particular, all nine of Lyagina's geometric equivalence classes listed in Table~\ref{table1} can be realized by chemical systems even when $\sigma_3=a_{0,2}=0$.

\begin{figure}
\epsfig{file=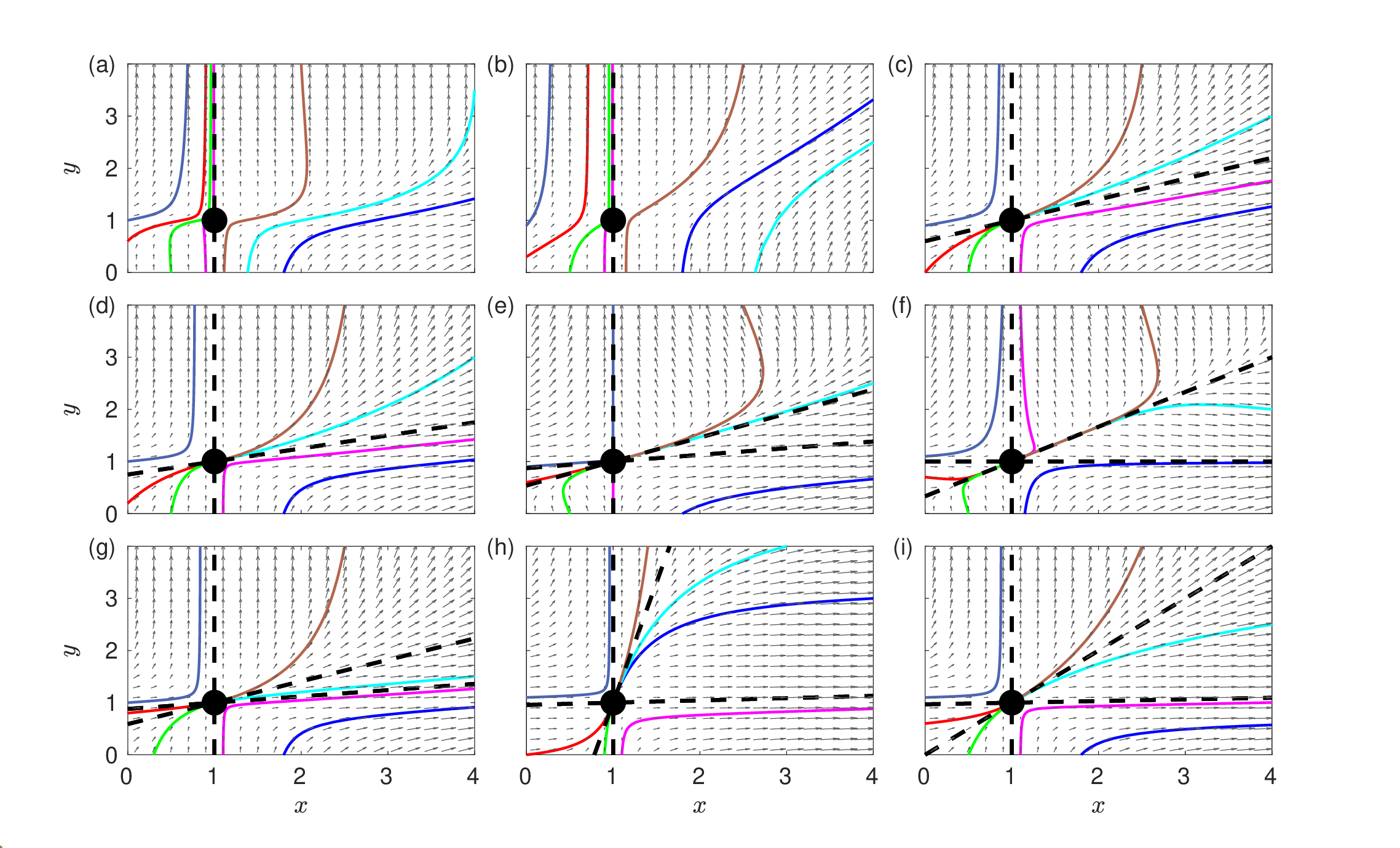,width=\textwidth}
\caption{{\it The phase planes of illustrative chemical systems given by Table~$\ref{table1}$. The invariant lines are plotted using the black dashed lines. Seven illustrative trajectories are plotted using coloured solid lines. The invariant line~$(\ref{invlines})$ corresponding to $\eta=\infty$ in plotted in each panel.}
(a) {\it Lyagina's class} $1$;
(b) {\it Lyagina's class} $4$;
(c) {\it Lyagina's class $5$, this panel includes the additional invariant line~$(\ref{invlines})$ corresponding to $\eta=2/5$};
(d) {\it Lyagina's class $8$, additional invariant line for $\eta=1/4$};
(e) {\it Lyagina's class $11$, this panel includes two additional invariant lines~$(\ref{invlines})$ corresponding to $\eta=\big(5\pm\sqrt{8}\big)/17$}; \break
(f) {\it Lyagina's class $12$, additional two invariant lines for $\eta=0$ and $\eta=2/3$};
(g) {\it Lyagina's class $14$, additional two invariant lines for $\eta=\big(43\pm\sqrt{553}\big)/162$};
(h) {\it Lyagina's class $15$, additional two invariant lines for $\eta=\big(93\pm\sqrt{8329}\big)/40$};
(i) {\it Lyagina's class $16$, additional two invariant lines for $\eta=1/32$ and $\eta=1$.}
}
\label{figure4}
\end{figure}

The chemical systems in Examples~\ref{example5} and \ref{example6} represent the most generic cases, with three and one invariant lines, respectively. The corresponding phase planes and chemical triangles are shown in Figures~\ref{figure2} and \ref{figure3} to illustrate the theory developed above. We conclude our investigation by presenting illustrative phase planes for all nine of Lyagina's geometric equivalence classes that can be realized by chemical systems. Since all nine classes can be realized by systems with $\sigma_3=0$, we focus on this case in our final set of examples, illustrated in Figures~\ref{figure4} and \ref{figure5}. These chemical systems are obtained by setting $\sigma_3=a_{0,2}=0$ and determining the remaining coefficients so that the conditions for each class are satisfied. The resulting coefficients are listed in Table~\ref{table1}. The corresponding phase planes are plotted in Figure~\ref{figure5}. In each panel, there is always the invariant line $x=1$, corresponding to $\eta=\infty$ in~(\ref{invlines}). Other invariant lines can be obtained by solving the quadratic equation~(\ref{reducedcubicinvlinesigma}), which can have zero (Figures~\ref{figure4}(a) and~\ref{figure4}(b)), one (Figures~\ref{figure4}(c) and~\ref{figure4}(d)) or two (Figures~\ref{figure4}(e)--\ref{figure4}(i)) distinct real solutions. 

For each case listed in Table~\ref{table1}, the $(\lambda,\mu)$-plane and the corresponding chemical triangle~(\ref{chemicaltriangle2}) are shown in Figure~\ref{figure5}. In each panel, we also plot the zero-derivative lines $Z[\eta_i]$, given by~(\ref{zeroderline}), and the undefined-derivative lines $U[\eta_i]$, given by~(\ref{undefline}), as dashed and solid lines, respectively. The blue lines correspond to $Z[\infty]$ and $U[\infty]$, given by~(\ref{infinitycase}), and appear in every panel. The number of additional lines is determined by the number of distinct real roots of the quadratic equation~(\ref{reducedcubicinvlinesigma}). We note that in Figures~\ref{figure5}(c) and~\ref{figure5}(d), the quadratic equation~(\ref{reducedcubicinvlinesigma}) has a double root $\eta_{\mathrm d}=\sigma_1/(2\sigma_2)$. In particular, the lines $Z[\eta_{\mathrm d}]$ and $U[\eta_{\mathrm d}]$, defined by~(\ref{zeroderline}) and~(\ref{undefline}), respectively, coincide and are given by $\eta_{\mathrm d}\lambda+\mu=\sigma_1$. Consequently, they appear as a single line in Figures~\ref{figure5}(c) and~\ref{figure5}(d).

\begin{figure}
\epsfig{file=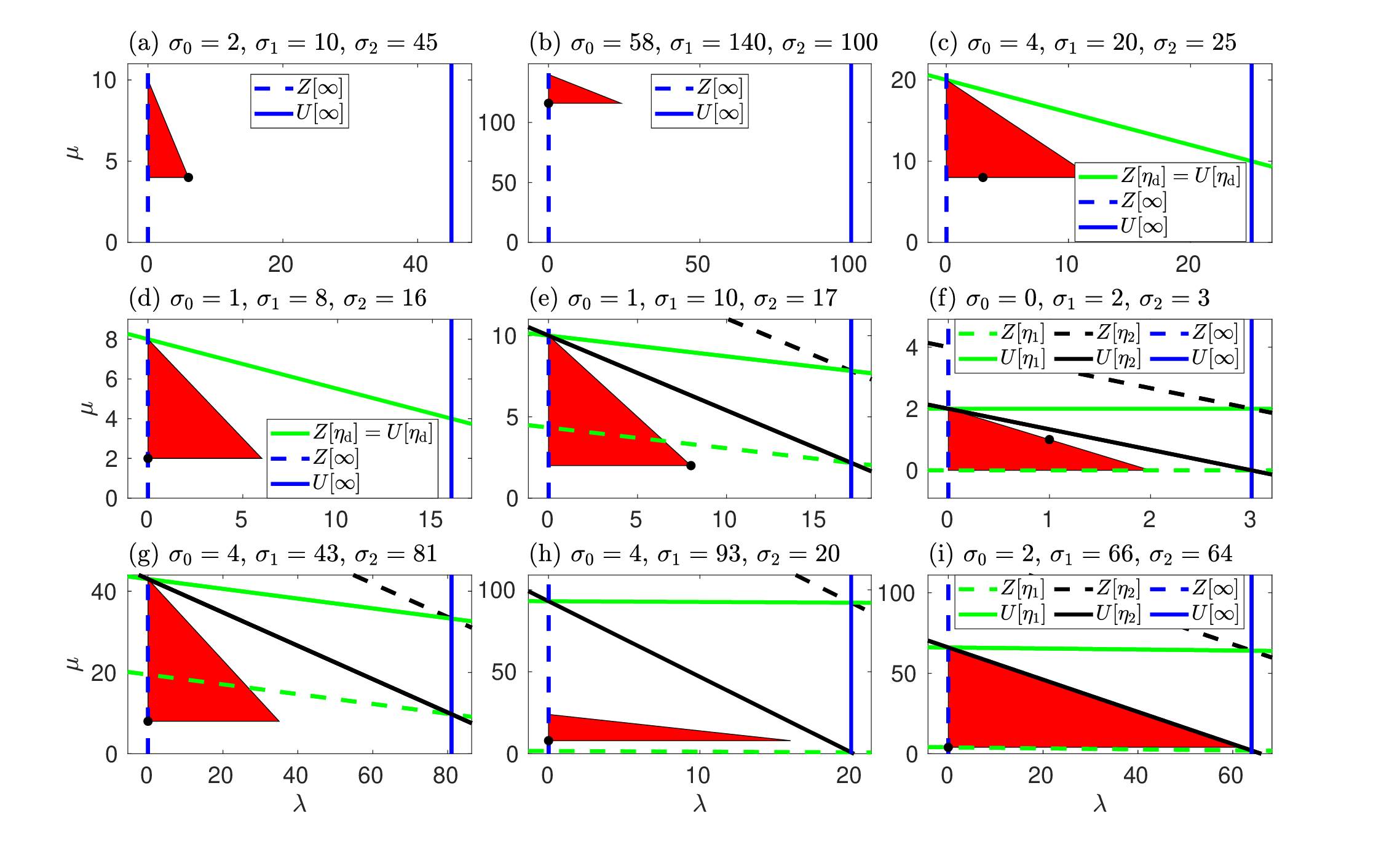,width=\textwidth}
\caption{{\it Chemical triangles~$(\ref{chemicaltriangle})$ in $(\lambda,\mu)$ planes for illustrative chemical systems given by Table~$\ref{table1}$ are plotted as red triangles. The zero-derivative lines $Z[\eta_i]$ are plotted using dashed lines and undefined-derivative lines $U[\eta_i]$ are plotted using solid lines. The black dot corresponds to the values of $(\lambda,\mu)$ used in the ODE system~$(\ref{homODE1lambdamu})$--$(\ref{homODE2lambdamu})$, which is visualized in $Figure~\ref{figure4}$ and parameterized by Table~$\ref{table1}$. The lines $Z[\infty]$ and $U[\infty]$ are plotted in each panel using blue dashed and blue solid lines, respectively.}
(a) {\it Lyagina's class} $1$;
(b) {\it Lyagina's class} $4$;
(c) {\it Lyagina's class $5$, line $Z[2/5]$ is identical to $U[2/5]$ and is visualized using green colour};
(d) {\it Lyagina's class $8$, line $Z[1/4]$ is identical to $U[1/4]$ and is visualized using green colour};
(e) {\it Lyagina's class $11$, lines $Z[\eta]$ and $U[\eta]$ are visualized for $\eta=\big(5\pm\sqrt{8}\big)/17$ using dashed and solid lines, respectively}; 
(f) {\it Lyagina's class $12$, lines $Z[\eta]$ and $U[\eta]$ are visualized for $\eta=0$ and $\eta=2/3$};
(g) {\it Lyagina's class $14$, lines $Z[\eta]$ and $U[\eta]$ are visualized for $\eta=\big(43\pm\sqrt{553}\big)/162$};
(h) {\it Lyagina's class $15$, lines $Z[\eta]$ and $U[\eta]$ are visualized for $\eta=\big(93\pm\sqrt{8329}\big)/40$};
(i) {\it Lyagina's class $16$, lines $Z[\eta]$ and $U[\eta]$ are visualized for $\eta=1/32$ and $\eta=1$.}
}
\label{figure5}
\end{figure}

\section{Discussion}

\label{sec4}

In this paper, we have studied planar second-order chemical reaction systems with a strongly non-hyperbolic equilibrium point. Since this problem is closely related to the study of systems of quadratic ODEs, for which a substantial body of theory already exists~\cite{Lyagina:1951:ICE,Newton:1978:TDH,Markus:1960:QDE}, we focused on equilibria with positive coordinates. By an appropriate rescaling of the coefficients, such an equilibrium can be assumed, without loss of generality, to be located at $[1,1]$. We have shown that, of the sixteen geometric equivalence classes identified by Lyagina~\cite{Lyagina:1951:ICE} for general quadratic ODE systems, only nine can be realized by chemical systems. In particular, we have established that phase planes containing elliptic sectors cannot be realized by planar second-order chemical reaction systems. The results are summarized in Table~\ref{table1}.

If a geometric equivalence class can be realized by a chemical reaction network, a natural question is to determine the simplest network within that class. Simplicity can be defined in several ways, for example, in terms of the smallest total number of reactions or the smallest number of second-order reactions. We have computationally searched for chemical systems with $a_{02}=0$, i.e., systems with only five second-order terms. We found that additional terms can also be eliminated and that systems with $8$–$10$ nonzero terms can be constructed for each realizable class. These examples are also given in Table~\ref{table1}. It remains an open question whether the number of nonzero terms can be reduced further. Moreover, in some cases, two nonzero terms can be realized by a single chemical reaction. For example, if $a_{1,1}=b_{1,1}=-k<0$, single chemical reaction 
$$
X \, + \, Y 
\;
\mathop{\longrightarrow}^{k}
\;
\emptyset
$$
can simultaneously generate the term $a_{1,1}\,x\,y$ in equation~(\ref{quadraticchemical1}) and the term $b_{1,1}\,x\,y$ in equation~(\ref{quadraticchemical2}). Thus, the number of chemical reactions can, in general, be smaller than the number of nonzero terms, providing an additional avenue for reducing the number of reactions in the constructed networks. For example, consider the following system of $m=5$ chemical reactions:
$$
\emptyset
\;
\mathop{\longrightarrow}^{1}
\;
Y \, ,
\qquad
Y
\;
\mathop{\longrightarrow}^{2}
\;
\emptyset \, ,
\qquad
2 X
\;
\mathop{\longrightarrow}^{1}
\;
3X \, ,
\qquad
X \, + \, Y 
\;
\mathop{\longrightarrow}^{2}
\;
Y \, 
\quad
\mbox{and}
\quad
2 Y 
\;
\mathop{\longrightarrow}^{1}
\;
X + 3 Y \, .
$$
Then the ODEs~(\ref{quadraticchemical1})--(\ref{quadraticchemical2}) have six non-zero terms
\begin{eqnarray*}
\frac{\mbox{{\rm d}}x}{\mbox{{\rm d}}t}
\,& = &\,
x^2
\,-\,
2 \, x \, y 
\,+\,
y^2\,,
\\
\frac{\mbox{{\rm d}}y}{\mbox{{\rm d}}t}
\,& = &\,
1 -2 \, y
\,+\,
y^2 \,,
\end{eqnarray*}
which satisfy relations~(\ref{othcoef1})--(\ref{othcoef6}). This system has three invariant lines passing through the equilibrium point $[1,1]$, given by $y=1+\eta_i(x-1)$, where $\eta_1=0$, $\eta_2=(3-\sqrt{5})/2$, and $\eta_3=(3+\sqrt{5})/2$. These lines divide the nonnegative quadrant $\overline{\Omega}$ into four parabolic and two hyperbolic sectors. Thus, this system provides an example of a chemical system belonging to Lyagina’s geometric equivalence class 12. It involves only $m=5$ chemical reactions, three of which are second-order reactions, and is therefore simpler than the example reported in Table~\ref{table1}, which was obtained computationally by imposing the condition $a_{0,2}=0$.

Chemical reaction systems~(\ref{crnex1}) and~(\ref{crnex2}) in Examples~\ref{example1} and~\ref{example2} include reactions with three products. Such systems cannot be {\em weakly reversible}, which is an important class of chemical reaction systems, where each reaction is contained in a directed cycle~\cite{Feinberg:2019:FCR}. If we consider such systems, then both source vertices and target vertices of the network must be at  $2X, X+Y$ or $2Y$. In particular, we conclude that the ODEs~(\ref{quadraticchemical1hom})--(\ref{quadraticchemical2hom}) obey a conservation law 
$
x(t) + y(t) \equiv c
$
for some real number $c \ge 0$, which is positive if the initial conditions are positive. While such systems are important from the point of view of the chemical reaction network theory~\cite{Feinberg:2019:FCR}, they are degenerate and not included in the classifications into geometric equivalence classes~\cite{Lyagina:1951:ICE,Newton:1978:TDH,Markus:1960:QDE}.

\vskip 6mm

\noindent
{\bf Acknowledgment:} The authors would like to express their sincere gratitude to Gheorghe Craciun for his valuable comments and suggestions during the preparation of this manuscript.

\vskip 8mm

\bibsep=0.1em

\end{document}